\documentclass[titlepage,a4paper,11pt]{amsart} 
\usepackage[foot]{amsaddr}
\usepackage{amssymb}
\usepackage[lite]{amsrefs}
\usepackage{t1enc}
\usepackage[latin1]{inputenc}
\usepackage[english]{babel}
\usepackage{mathrsfs} 
\usepackage{hyperref}
\usepackage[all]{xy} 
\usepackage[lite]{amsrefs}
\usepackage[left=3cm,top=3cm,right=3cm,bottom=3cm]{geometry}

\newtheorem{thm}{Theorem}[section]
\newtheorem{prop}[thm]{Proposition}
\newtheorem{lem}[thm]{Lemma}
\newtheorem{cor}[thm]{Corollary}
\newtheorem*{defn}{Definition}
\newtheorem{rem}[thm]{Remark}
\newtheorem{ex}[thm]{Example}

\DeclareMathOperator\Le{L}
\DeclareMathOperator\Gr{Gr}

\DeclareMathOperator\Tr{Tr}

\DeclareMathOperator\spec{spec}

\DeclareMathOperator\oo{{\mathcal O}}
\DeclareMathOperator\C{\mathbb{C}}
\DeclareMathOperator\R{\mathbb{R}}
\DeclareMathOperator\N{\mathbb{N}}
\DeclareMathOperator\Z{\mathbb{Z}}

\DeclareMathOperator\radius{radius}

\DeclareMathOperator\id{\mathbf{1}}

\def\u{{\mathfrak u}}

\linespread{1.1}
\setcounter{tocdepth}{1}
\hypersetup{
  colorlinks   = true, 
  urlcolor     = blue, 
  linkcolor    = blue, 
  citecolor   = red    
}
\title{Circumcenters in Finsler unitary groups}
\date{\today}
\author{Martin Miglioli}
\email[Martin Miglioli]{martin.miglioli@gmail.com}
\address[Martin Miglioli]{Instituto Argentino de Matem\'atica-CONICET. Saavedra 15, Piso 3, (1083) Buenos Aires, Argentina}
\thanks{The author was supported by IAM-CONICET, grants PIP 2010-0757 (CONICET) and PICT 2010-2478 (ANPCyT)}

\begin{document}
\begin{abstract}
We study convexity properties of distance functions in Finsler unitary groups, where the Finsler structure is defined by translation of the $p$-Schatten norm on the Lie algebra. As a result we prove the existence of circumcenters for sets with radius less than $\pi/2$ in several metrics. This result is applied to a fixed point property and to quantitative metric bounds in certain rigidity problems. Bounds for convexity, existence of circumcenters and rigidity are shown to be optimal. 

\textbf{NOTE: This is an unpublished manuscript. The results in this manuscript were generalized to the infinite dimensional context in the preprint "Geometry of infinite dimensional groups: convexity and fixed points", arXiv:2203.06315, M. Miglioli}\\

\medskip

\noindent \textbf{Keywords.} Finsler metric, convexity, unitary groups, circumcenter, rigidity
\end{abstract}

\maketitle
\tableofcontents




\section{Introduction}

In this article we address convexity properties of distance functions in Finsler unitary groups. The Finsler structures are defined by translation of the $p$-Schatten norm on the Lie algebra. See the work \cites{pschatten,fredholm,optimal} where these groups with Finsler metrics were studied in the finite and infinite dimensional context. We establish convexity results for the metrics and apply these to the existence of circumcenters. We show that the radius bounds on the sets that ensure existence are optimal. From these fixed point properties follow. 

Another approach is based on the construction of the Riemannian center of mass or Frechet-Karcher mean. The study of these means began in \cites{karcher,karcher2,karcher3} and was applied to some rigidity problems, that is, metric conditions for the equivalence of representations. They were also applied to the approximation of quasi-representations by representations, see Section 3 in \cite{karcher3}. These centers of mass are generalizations of the Frechet center of mass and are defined for probability measures.

The existence of circumcenters in our article follows from the strong convexity properties of the function 
$$v\mapsto d_2(v,u)^2$$
where $d_2$ is the distance obtained from the standard Riemannian structure. A real valued function $f$ on an interval is said to be strongly convex if $f(t)-\lambda t^2$ is convex for a $\lambda>0$, and the above defined function is strongly convex in a domain if it is a strongly convex function when it is composed with a unit-speed geodesic in the domain.

The function
$$v\mapsto \sup_{a\in A}d_p(v,a)^p$$
is only strictly convex on a domain if $A$ is finite. An optimal radius of convexity for the domains is obtained. When the radius of $A$ is less than $\pi/2$ its minimizer is the circumcenter of $A$. For infinite $A$ we obtain circumcenters with metrics which are slight perturbations of the $d_p$ metric with the $d_2$ metric to get strong convexity. Letting $p\to\infty$ we can write several radius bounds in term of the distance $d_\infty$ derived from the operator norm on the Lie algebra. We also restrict the study to geodesic subspaces of $U$ such as the special unitary and orthogonal groups, and Grassmannians embedded as spaces of symmetries.  

The article is organized as follows. In Section \ref{prel} we review some facts on the metric geometry of unitary groups endowed with Finsler structures derived from $p$-Schatten norms. In Section \ref{sprop} we establish strong convexity properties of the distance function. We first obtain the optimal radius of convexity, and we establish strong convexity properties for $2$-powers of the distance $d_2$. In Section \ref{scirc} we establish the existence of circumcenters for subsets of the unitary group with radius less than $\pi/2$ in the $d_p$ distance in the case of finite sets, and in the perturbed distances for possibly infinite sets. We apply the existence of circumcenters to prove a fixed point theorem with optimal bounds. Finally, in Section \ref{srigid} the fixed point theorem is applied to optimal metrical bounds in certain rigidity results for representations.  

\section{Preliminaries}\label{prel}

In this section we recall some geometric facts about the spaces of unitaries endowed with a bi-invariant Finsler metric derived from $p$-Schatten norms on the Lie algebra, with $2\leq p< \infty$.  Throughout this article, we will  use the metric and the geodesic structure of this space, which was studied in \cites{pschatten,fredholm,optimal}.

For $n\in \N$ let 
$$U_n=\{u\in M_n(\C):u^*u=\id\}$$
be the group of unitaries and let
$$\u_n=\{x\in M_n(\C):x^*=-x\}$$
be its Lie algebra. We will often omit the dimension $n$ from the notation. On the Lie algebra we define the following $p$-Schatten norm
$$\|x\|_p=\Tr(|x|^p)^{\frac{1}{p}}=\Tr((x^*x)^\frac{p}{2})^{\frac{1}{p}}$$
for $2\leq p<\infty$. We define $\|x\|_\infty$ as the operator norm of $x$:
$$\|x\|_\infty=\sup_{\xi\in \u}\frac{\|x\xi\|}{\|\xi\|}.$$
These norms are invariant by conjugation by unitaries, so by right or left translation we can define a norm on the tangent spaces at all points of $U$.

We introduce the Finsler metric which is Riemannian if $p = 2$. Let $\Le_p$ denote the length functional for piecewise smooth curves $\alpha$ in $U$, measured with the $p$-Schatten or operator norm:
$$\Le_p(\alpha)=\int_{t_0}^{t_1}\|\dot{\alpha}(t)\|_pdt,$$
and we define $d_p$ as the rectifiable distance in $U$
$$d_p(u_1,u_2)= \inf\{\Le_p(\gamma):\gamma \subseteq U \mbox{ joins } u_1 \mbox{ and }u_2\}.$$
This metric is invariant by left and right translations. 

The norms $\|\cdot\|_p$ for $2\leq p<\infty$ have strict convexity properties which the norm $\|\cdot\|_\infty$ does not have, but $\|x\|_p\to \|x\|_\infty$ as $p\to \infty$. Next we show that that as $p\to \infty$ the balls $B_p(u,r)$ become larger and tend to $B_\infty(u,r)$. 

\begin{prop}\label{compare}
For $u\in U$ and $2\leq p<\infty$ we have
$$B_p(u,r)\subseteq B_\infty(u,r)\mbox{  and  }B_\infty(u,r)\subseteq B_p(u,n^{1/p}r),$$
and the same inclusions hold for the closed balls.
\end{prop}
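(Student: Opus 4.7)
The plan is to reduce the ball inclusions to pointwise comparisons of the Schatten norms on the Lie algebra, which then integrate to comparisons of lengths of curves, and finally to comparisons of the rectifiable distances.

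First I would establish, for every $x\in\u_n$, the two-sided comparison
$$\|x\|_\infty\leq \|x\|_p\leq n^{1/p}\|x\|_\infty.$$
Writing $x$ in terms of its singular values $s_1,\ldots,s_n\geq 0$, one has $\|x\|_\infty=\max_i s_i$ and $\|x\|_p=\bigl(\sum_i s_i^p\bigr)^{1/p}$. The left inequality comes from picking out the maximal term in the sum; the right inequality from the crude bound $\sum_i s_i^p\leq n\,(\max_i s_i)^p$.

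Next, for any piecewise smooth curve $\alpha\subseteq U$, integrating the pointwise norm inequalities along $\alpha$ gives
$$\Le_\infty(\alpha)\leq \Le_p(\alpha)\leq n^{1/p}\,\Le_\infty(\alpha).$$
Taking the infimum over curves joining $u_1$ and $u_2$ and using the definition of $d_p$ and $d_\infty$, we obtain
$$d_\infty(u_1,u_2)\leq d_p(u_1,u_2)\leq n^{1/p}\,d_\infty(u_1,u_2).$$
From the first of these, $d_p(v,u)<r$ forces $d_\infty(v,u)<r$, which is the inclusion $B_p(u,r)\subseteq B_\infty(u,r)$. From the second, $d_\infty(v,u)<r$ forces $d_p(v,u)<n^{1/p}r$, which is $B_\infty(u,r)\subseteq B_p(u,n^{1/p}r)$. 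The same argument with strict inequalities replaced by non-strict ones yields the statement for closed balls.

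There is no serious obstacle here: everything boils down to the elementary singular-value comparison between the $p$-Schatten and operator norms, together with monotonicity of the length functional in the norm. The only point worth noting is that since both distances are defined as infima over the same class of rectifiable curves (joining $u_1$ and $u_2$ inside $U$), the length inequalities pass directly to the distances without any need to compare the respective length-minimizing geodesics.
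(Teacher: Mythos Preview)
Your proof is correct and follows essentially the same approach as the paper: establish the elementary comparison $\|x\|_\infty\le\|x\|_p\le n^{1/p}\|x\|_\infty$ on the Lie algebra, integrate it to a comparison of lengths, pass to the infimum to compare the rectifiable distances, and read off the ball inclusions. The only cosmetic difference is that the paper speaks of eigenvalues (which for skew-adjoint $x$ have moduli equal to the singular values), while you phrase it in terms of singular values.
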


\begin{proof}
The norm $\|x\|_p$ is the $p$-norm of the eigenvalues of $x$, therefore 
$$\|x\|_\infty \leq \|x\|_p\leq n^{1/p}\|x\|_\infty$$
from which $L_\infty(\alpha) \leq L_p(\alpha)\leq n^{1/p}L_\infty(\alpha)$ follows, so that
$$d_\infty(u,v) \leq d_p(u,v)\leq n^{1/p}d_\infty(u,v)$$
for $p\geq 2$ and $u,v\in U$. Hence the conclusion follows.
\end{proof}

We focus on the existence of metric geodesics. For $2\leq p<\infty$ one-parameter unitary groups $e^{tx}\in U$ with $x\in\u$, regarded as curves of unitaries, have minimal length in the $p$-distance, as long as $\|x\|_\infty \leq\pi$. Note that the exponential map is surjective and it is a bijection between the sets
$$\u \supseteq \{z\in \u: \|z\|_\infty < \pi\}\to\{u\in U:\|\id - u\|_\infty < 2\}$$
Moreover, $\exp: \{z\in U : \|z\|_\infty \leq \pi\}\to U$ is surjective. 

We recall facts from \cite{pschatten} concerning the minimality of geodesics in infinite dimensional unitary groups endowed with Finsler distances derived from $p$-norms. This could be derived from the general theory of Riemannian manifolds for the case $p = 2$. We state them in the finite dimensional context using the following embedding of the group $U_n$ into an infinite dimensional group of unitaries given by

$$u\mapsto\left(\begin{array}{cc}
u & 0 \\
\\
0 & \id
\end{array}\right).$$

\begin{thm}[{\cite[Theorem 3.2]{pschatten}}]\label{geodesicasp}
For $2\leq p<\infty$ consider $(U,d_p)$, the following facts hold:

(1) Let $u\in U$ and $x\in U$ with $\|x\|_\infty\leq \pi$. Then the curve $\mu(t)=ue^{tx}$,
$t\in [0,1]$, is shorter than any other piecewise smooth curve in $U$ joining the same endpoints. Moreover if $\|x\|_\infty< \pi$ then $\mu$ is unique with this property.

(2) Let $u_0,u_1\in U$. Then there exists a minimal geodesic curve joining
them. If $\|u_0-u_1\|_\infty<2$, this geodesic is unique.

(3) If $u,v\in U$, then
$$\left(\sqrt{1-\frac{\pi^2}{12}}\right)d_p(u,v)\leq \|u-v\|_p\leq d_p(u, v).$$
In particular the metric space $(U,d_p)$ is complete.

\end{thm}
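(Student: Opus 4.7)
The plan is to address the three parts in order, with Part (1) carrying the real analytic content and Parts (2)--(3) following by exponential surjectivity and a spectral calculation, respectively.

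For Part (1), bi-invariance of $d_p$ reduces everything to $u = \id$. The upper bound $\Le_p(\mu) = \int_0^1 \|xe^{tx}\|_p\, dt = \|x\|_p$ is a direct unitary-invariance computation, giving $d_p(\id, e^x) \leq \|x\|_p$. Minimality against an arbitrary piecewise smooth competitor $\gamma$ from $\id$ to $e^x$ is the substantive point. Following \cite{pschatten}, the plan is to write $\dot\gamma(t) = \gamma(t) z(t)$ with $z(t) \in \u$, so that $\Le_p(\gamma) = \int_0^1 \|z(t)\|_p\, dt$, and to compare this integral to $\|x\|_p$ by a convexity estimate that uses the hypothesis $\|x\|_\infty \leq \pi$ essentially (the endpoint $e^x$ lies within the injectivity radius of the exponential, so that a smooth logarithm is available locally). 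Uniqueness under $\|x\|_\infty < \pi$ is then obtained by identifying any other minimizer again as a one-parameter subgroup and invoking uniqueness of the logarithm of $e^x$ in the open ball of radius $\pi$.

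Part (2) is an immediate consequence of Part (1) together with the surjectivity of $\exp : \{z \in \u : \|z\|_\infty \leq \pi\} \to U$ stated just before the theorem: given $u_0, u_1 \in U$, pick $x \in \u$ with $\|x\|_\infty \leq \pi$ and $u_0^{-1}u_1 = e^x$, and set $\mu(t) = u_0 e^{tx}$. When $\|u_0-u_1\|_\infty < 2$, unitary invariance of the operator norm gives $\|\id - u_0^{-1}u_1\|_\infty < 2$, so $u_0^{-1}u_1$ has no eigenvalue $-1$; hence $x$ can be chosen uniquely with $\|x\|_\infty < \pi$, and uniqueness of the minimal geodesic follows from (1).

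For Part (3), the upper bound is the standard triangle inequality for vector-valued integrals: for any $C^1$ curve $\gamma$ from $u$ to $v$ one has $\|u-v\|_p = \|\int_0^1 \dot\gamma(t)\,dt\|_p \leq \int_0^1 \|\dot\gamma(t)\|_p\,dt = \Le_p(\gamma)$, whence $\|u-v\|_p \leq d_p(u,v)$ on taking infima. For the lower bound, write $v = u e^x$ with $\|x\|_\infty \leq \pi$ and $d_p(u,v) = \|x\|_p$ using (1)--(2); bi-invariance reduces the claim to $\|\id - e^x\|_p \geq \sqrt{1-\pi^2/12}\,\|x\|_p$. Diagonalizing $x$ with eigenvalues $i\lambda_k$, $|\lambda_k| \leq \pi$, the eigenvalues of $\id - e^x$ have squared moduli $2-2\cos(\lambda_k)$, so after taking $p$-th powers and summing the claim reduces to the scalar bound $2-2\cos(t) \geq (1-\pi^2/12)\,t^2$ for $|t| \leq \pi$. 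This in turn follows from $2(1-\cos t) \geq t^2 - t^4/12$ (the difference vanishes to third order at zero and has nonnegative fourth derivative $2(1-\cos t)$) combined with $t^4/12 \leq \pi^2 t^2/12$ on $[-\pi,\pi]$. Completeness is then immediate: a $d_p$-Cauchy sequence is $\|\cdot\|_p$-Cauchy by the upper bound, has a limit in $U$ (closed in $M_n(\C)$), and the lower bound converts norm convergence back to $d_p$ convergence.

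The main obstacle is the minimality assertion in Part (1) for $p > 2$. In the Riemannian case $p = 2$ this is classical compact Lie group theory: bi-invariant geodesics are minimizing up to the first conjugate point. For genuine Finsler $p$-norms there is no Jacobi-field machinery available, and one must instead carry out a delicate direct convexity estimate on $\int \|z(t)\|_p\, dt$ --- this is the technical content imported from \cite{pschatten}. Everything else in the theorem reduces to (1) by invariance and an eigenvalue calculation.
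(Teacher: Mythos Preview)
The paper does not prove this theorem at all: it is stated as a quotation of \cite[Theorem~3.2]{pschatten} (note the attribution in the theorem header and the sentence preceding it, ``We recall facts from \cite{pschatten}\ldots''), and no argument is given in the present text. There is therefore nothing to compare your proposal against.

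That said, your outline is sound as a reconstruction. Part~(2) is correctly reduced to Part~(1) via surjectivity of $\exp$ on the closed $\pi$-ball and the observation that $\|u_0-u_1\|_\infty<2$ forces $-1\notin\spec(u_0^{-1}u_1)$. Your treatment of Part~(3) is complete and correct: the scalar inequality $2(1-\cos t)\ge t^2-t^4/12$ (verified by examining the fourth derivative) together with $t^4/12\le \pi^2 t^2/12$ on $[-\pi,\pi]$ yields exactly the lower bound after diagonalizing $x$, and the upper bound is the standard length-versus-chord estimate. You are also right to flag Part~(1) as the genuine analytic content: the Finsler minimality for $p>2$ requires the machinery developed in \cite{pschatten} and is not something one can sketch in a paragraph. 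Since you explicitly defer that step to the reference --- which is precisely what the paper itself does --- your proposal is adequate for the role this theorem plays here.
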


Note that if $d_\infty(u,v)<\pi$ then the geodesic joining $u$ and $v$ is defined by
$$\gamma_{u,v}(t)=u\exp(t\log(u^{-1}v)),$$
and if $\|x\|_p=1$ then the geodesic $\gamma(t)=ue^{tx}$ has unit speed.

Theorem \ref{geodesicasp} (1) implies that $(U,d_p)$ is a strictly interior length space, that is, a space where length minimizing curves between arbitrary points exist and its length is equal to the distance, see Definition 2.1.10 in \cite{metric}.

We define an open ball as
$$B(u,r)=\{v\in U:d_p(u,v)<r\}.$$
The closed ball is defined with with $\leq$ and denoted with $B[u,r]$. When we want to stress that the distance $d_p$ is used we write $B_p(u,r)$. When a subspace $M$ of $U$ is considered we write $B_M(u,r)$.

Note that for $r<\pi$
\begin{align*}
B_\infty[\id,r]&=\exp(\{x\in\u:\|x\|_\infty\leq r\}\\
&=\{u\in U:\spec(u)\subseteq \exp(i[-r,r])\}.
\end{align*}

\section{Convexity properties}\label{sprop}
In this section we obtain better bounds for convexity properties in the finite dimensional context based on results of \cite{pschatten}. We recall the following theorems for the convenience of the reader and include a slight generalization of the first. We state them in the finite dimensional context.

\begin{thm}[{\cite[Theorem 3.6]{pschatten}}]\label{strict}
Let $p$ be an even integer, $u$ an element of a finite dimensional unitary group $U$, and let $\beta\subseteq B(u,\pi/2)$ be a non constant geodesic. Then the function 
$$f(t)=d_p(u,\beta(t))^p$$ 
is strictly convex.
\end{thm}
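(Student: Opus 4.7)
The plan is to pass to the exponential chart centered at $u$ and compute the second derivative of $f$ directly. By bi-invariance of $d_p$ I may assume $u=\id$. Proposition~\ref{compare} gives $B(\id,\pi/2)\subseteq B_\infty(\id,\pi/2)$, so the logarithm is a diffeomorphism on a neighborhood of $\beta$ and I can write $\beta(t)=\exp(w(t))$ with $w(t)\in\u$ and $\|w(t)\|_\infty<\pi/2$. Theorem~\ref{geodesicasp}(1) then gives
\[
f(t)=\|w(t)\|_{p}^{p}=(-1)^{p/2}\Tr\bigl(w(t)^{p}\bigr),
\]
using $w(t)^{*}=-w(t)$ and $p$ even. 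Parameterizing $\beta(t)=\beta(0)e^{tX}$ with $X\in\u$ fixed, $w(t)$ satisfies the ODE
\[
\frac{1-e^{-\operatorname{ad}(w(t))}}{\operatorname{ad}(w(t))}\bigl(w'(t)\bigr)=X,
\]
which is invertible precisely because $\|w(t)\|_\infty<\pi$.

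To prove strict convexity, the goal is to establish $f''(t)>0$ whenever $\beta(t)\neq u$. Two applications of trace cyclicity yield
\[
f''(t)=p(-1)^{p/2}\sum_{k=0}^{p-2}\Tr\bigl(w^{k}w'w^{p-2-k}w'\bigr)+p(-1)^{p/2}\Tr\bigl(w^{p-1}w''\bigr).
\]
Diagonalizing $w(t)$ with eigenvalues $i\lambda_{1},\dots,i\lambda_{n}\in i(-\pi/2,\pi/2)$ and summing the geometric series in $k$, the first contribution collapses (using $w'^{*}=-w'$) to the Schur-type form
\[
p\sum_{j,l}\frac{\lambda_{j}^{p-1}-\lambda_{l}^{p-1}}{\lambda_{j}-\lambda_{l}}\,|w'_{jl}|^{2},
\]
whose divided-difference coefficients are all non-negative because $\lambda\mapsto\lambda^{p-1}$ is strictly increasing ($p-1$ is odd), and are strictly positive whenever the pair $(\lambda_{j},\lambda_{l})$ is not $(0,0)$; in particular this first term is strictly positive when $w'\not\equiv 0$, which by invertibility of the ODE operator is equivalent to $X\neq 0$.

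The main obstacle is to show that the remaining $p(-1)^{p/2}\Tr(w^{p-1}w'')$ does not destroy positivity. My plan is to differentiate the ODE once more to express $w''$ explicitly in terms of $w,w',X$, substitute back, and rearrange $f''$ as a single quadratic form in $X$ whose coefficients are trigonometric functions of the spectrum of $\operatorname{ad}(w)/2$; the bound $\|w\|_\infty<\pi/2$ places this spectrum inside $i(-\pi/2,\pi/2)$, which is exactly the regime where the relevant series have the correct sign, so the form is strictly positive definite. An alternative I would pursue in parallel is to first handle the commuting case $[w(t),X]=0$, where $w(t)=w(0)+tX$ and strict convexity reduces to that of $\sum_{j}(\alpha_{j}+t\beta_{j})^{p}$ for $p$ even, and then propagate by an analytic perturbation in the commutator $[w,X]$, with the radius bound $\pi/2$ guaranteeing that the leading positive term dominates the correction throughout $\beta$.
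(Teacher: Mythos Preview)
The paper does not reprove this result: it simply cites \cite[Theorem~3.6]{pschatten} and adds the one missing case, namely when a prolongation of $\beta$ passes through $u$, where $f(t)=\|x\|_p^p\,t^p$ and strict convexity is elementary. Your proposal is thus an attempt to redo the cited argument from scratch, and it diverges from that argument at the crucial step.

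The decisive trick you are missing, visible in the paper's own Theorem~\ref{strong} and coming from \cite{pschatten}, is to simplify $f'$ \emph{before} differentiating a second time. From $z=\int_0^1 e^{-tw}\dot w\,e^{tw}\,dt$ and the fact that $w^{p-1}$ commutes with $e^{tw}$, one gets
\[
\Tr(w^{p-1}\dot w)=\int_0^1\Tr\bigl(e^{tw}w^{p-1}e^{-tw}\dot w\bigr)\,dt=\Tr(w^{p-1}z),
\]
so that $f'(s)=p(-1)^{p/2}\Tr(w_s^{p-1}z)$ with $z$ \emph{constant}. Differentiating this expression produces no $w''$ at all; one obtains $f''(s)$ as an integral $\int_0^1 H_{w_s}(\delta_s(0),\delta_s(t))\,dt$ of values of a fixed Hessian form along the curve $\delta_s(t)=e^{-tw_s}\dot w_s e^{tw_s}$, and positivity follows from an angle estimate governed by $\|w_s\|_\infty<\pi/2$. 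Your route, by contrast, carries the term $p(-1)^{p/2}\Tr(w^{p-1}w'')$ and then proposes to control it by differentiating the ODE for $w'$ and reassembling a quadratic form in $X$; this is not carried out, and neither the ``trigonometric coefficients'' argument nor the analytic perturbation from the commuting case is made precise enough to be checked.

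There is also a concrete error in your analysis of the first term. The divided-difference coefficient $(\lambda_j^{p-1}-\lambda_l^{p-1})/(\lambda_j-\lambda_l)$ equals $(p-1)\lambda^{p-2}$ on the diagonal, which vanishes at $\lambda=0$ for $p\ge 4$. Hence when $w(t)=0$ (i.e.\ $\beta(t)=u$) the entire first term is zero regardless of $w'$, so your claim that it is strictly positive whenever $w'\neq 0$ fails exactly at the point the paper singles out. That case must be treated separately, as the paper does, and even then one only gets $f''=0$ at an isolated point and must argue strict convexity from $f'' \ge 0$ with equality at a single $t$.
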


\begin{proof}
Theorem 3.6 in \cite{pschatten} does not include the case when a prolongation of $\beta$ includes $u$. Note that if a prolongation of $\beta$ includes $u$ then the prolongation is of the form $\beta(t)=ue^{tx}$ and 
$$f(t)=\|x\|^pt^p.$$
Therefore $f''(t)=p(p-1)\|x\|^{p}t^{p-2}$, and $f''(t)=0$ only for $t=0$ and $p\geq 4$ even. Hence strict convexity still holds in this case.
\end{proof}

\begin{thm}[{\cite[Theorem 2.8]{fredholm}}]\label{conv}
Let $u$ be an element of a finite dimensional unitary group $U$, and let $\beta\subseteq B_\infty(u,\pi/2)$ be a geodesic. Then the function 
$$f(t)=d_\infty(u,\beta(t))$$ 
is convex.
\end{thm}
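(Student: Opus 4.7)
The plan is to work spectrally. By bi-invariance of $d_\infty$, I may reduce to $u=\id$ and parameterize $\beta(t)=v\exp(tx)$ with $v\in U$ and $x\in\u$, so that
$$f(t)=\|\log\beta(t)\|_\infty=\max_j|\theta_j(t)|,$$
where $e^{i\theta_j(t)}$ are the eigenvalues of $\beta(t)$ lifted so that $|\theta_j(t)|<\pi/2$; this lift is possible since $\beta\subseteq B_\infty(\id,\pi/2)$.

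A natural first attempt is a limiting argument from Theorem~\ref{strict}: by Proposition~\ref{compare}, if $\beta\subseteq B_\infty(\id,\pi/2-\varepsilon)$, then $\beta\subseteq B_p(\id,\pi/2)$ for every sufficiently large $p$, so Theorem~\ref{strict} gives strict convexity of $d_p(\id,\beta(t))^p$ for large even $p$. Taking $p$-th roots and letting $p\to\infty$, however, only produces midpoint quasi-convexity of $d_\infty(\id,\beta(t))$, since $(\lambda a^p+(1-\lambda)b^p)^{1/p}\to\max(a,b)$, and this is strictly weaker than convexity. I would therefore proceed via the equivalent sublevel-set formulation: for $v_1,v_2\in B_\infty(\id,\pi/2)$ with $r_i=d_\infty(\id,v_i)$ and $\lambda\in[0,1]$, show
$$\spec\bigl(v_1\exp(\lambda\log(v_1^{-1}v_2))\bigr)\subseteq e^{i[-(\lambda r_2+(1-\lambda)r_1),\,\lambda r_2+(1-\lambda)r_1]}.$$

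The main obstacle is this spectral inequality, which concerns eigenvalues of a product of two unitaries each having spectrum in an arc $e^{i[-r_i,r_i]}$; it should be accessible by a Lidskii/Horn-type majorization argument combined with the convexity of $|\theta|$ on $(-\pi/2,\pi/2)$. A complementary route is to analyze each branch $\theta_j(t)$ via Hellmann--Feynman, $\theta_j'(t)=-i\langle v_j(t),xv_j(t)\rangle$, together with a resolvent expansion for $v_j'(t)$; the resulting formula for $\theta_j''(t)$ contains eigenvalue-gap denominators $\sin((\theta_k-\theta_j)/2)$ which do not degenerate in the range $|\theta_j|<\pi/2$, and a Danskin-type comparison of one-sided derivatives at crossings can then recover convexity of $\max_j|\theta_j(t)|$ even when individual branches are not convex. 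The restriction $r<\pi/2$ is essential in either approach, as it keeps eigenvalues in the open right half-plane so that the lifted phase is a well-defined continuous additive parameter and the principal branch of $\log$ remains analytic on the region of interest.
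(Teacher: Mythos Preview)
The paper does not prove this result; it is quoted from \cite[Theorem~2.8]{fredholm} and used as a black box (the sentence preceding Theorem~\ref{strict} says the theorems are ``recalled for the convenience of the reader''). So there is no proof in the paper to compare against.

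Regarding your proposal on its own merits: it is a plan rather than a proof, and both suggested routes have genuine gaps. You are right that passing to the limit $p\to\infty$ in Theorem~\ref{strict} only yields $f\bigl(\tfrac{t_0+t_1}{2}\bigr)\le\max\bigl(f(t_0),f(t_1)\bigr)$, which is strictly weaker than convexity, and it is correct to abandon that. But the ``sublevel-set formulation'' you then write down is literally the convexity inequality $d_\infty(\id,\gamma_{v_1,v_2}(\lambda))\le(1-\lambda)r_1+\lambda r_2$ restated in spectral language, so nothing has been reduced; saying it ``should be accessible by a Lidskii/Horn-type majorization argument'' is a hope, not an argument. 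The relevant sharp estimate is closer to a Thompson-type inequality for $\|\log(e^Ae^B)\|_\infty$ with $A,B$ skew-adjoint than to the additive Horn inequalities, and even that does not directly give the interpolated bound along $t\mapsto v_1e^{t\log(v_1^{-1}v_2)}$.

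The Hellmann--Feynman route contains a specific error. The denominators in second-order perturbation theory vanish exactly at eigenvalue crossings, and the hypothesis $|\theta_j|<\pi/2$ does nothing to prevent crossings; your claim that the ``denominators \ldots\ do not degenerate in the range $|\theta_j|<\pi/2$'' is false as stated. More seriously, Danskin-type envelope theorems transfer convexity \emph{from} the individual objective functions \emph{to} their pointwise supremum; they do not manufacture convexity of $\max_j|\theta_j(t)|$ when the individual branches $|\theta_j(t)|$ are nonconvex, which you yourself concede they may be. The final clause of that paragraph therefore asserts precisely the thing that remains to be proved.
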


We include the following corollary of Theorem \ref{conv}. We will not use it in the article.

\begin{prop}
Let $u\in U$ and $x\in\u$ such that $\spec(ue^{tx})\in S^1 \backslash\{-1\}$ for $t\in (0,l)$. Let 
$$\theta_{max}(t)=\max\spec(-i\log(ue^{tx}))\mbox{  and  }\theta_{min}(t)=\min\spec(-i\log(ue^{tx})).$$ 
If for an open interval $I\subseteq (0,l)$ we have $\theta_{max}(t)-\theta_{min}(t)<\pi$ for $t\in I$, then in the interval $I$ the function $\theta_{max}$ is convex and the function $\theta_{min}$ is concave. The $\pi$ bound for the difference between the maximum and minimum eigenvalue is optimal.
\end{prop}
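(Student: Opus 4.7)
The plan is to apply Theorem \ref{conv} with a carefully chosen scalar reference point $w = e^{i\alpha}\id$ depending on $t_0 \in I$. The key identity is that if $|\theta_j(t) - \alpha| < \pi$ for every eigenvalue, then $w^{-1}\beta(t) = e^{-i\alpha}\beta(t)$ has principal logarithm with spectrum $\{i(\theta_j(t) - \alpha)\}_j$, so
$$d_\infty(w, \beta(t)) = \max_j |\theta_j(t) - \alpha|.$$
To single out the top eigenvalue I would choose $\alpha$ in the range $(\theta_{max}(t_0) - \pi/2,\, (\theta_{max}(t_0) + \theta_{min}(t_0))/2)$, which is a non-empty open interval precisely because $\theta_{max}(t_0) - \theta_{min}(t_0) < \pi$. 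The lower bound on $\alpha$ forces $\theta_{max}(t_0) - \alpha < \pi/2$, while the upper bound $\alpha \leq (\theta_{max}(t_0) + \theta_{min}(t_0))/2$ is exactly the condition that makes $\max_j|\theta_j(t_0) - \alpha|$ equal to $\theta_{max}(t_0) - \alpha$.

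Fix such $\alpha$. By continuity of the eigenvalues, which holds because $\spec(\beta(t)) \subseteq S^1\setminus\{-1\}$ makes the principal branch of the logarithm continuous along $\beta$, the two strict inequalities above persist on some open neighborhood $J \subseteq I$ of $t_0$. On $J$ one therefore has $d_\infty(w, \beta(t)) = \theta_{max}(t) - \alpha < \pi/2$, so the geodesic segment $\beta|_J$ lies in $B_\infty(w, \pi/2)$ and Theorem \ref{conv} applies: the map $t \mapsto \theta_{max}(t) - \alpha$ is convex on $J$. Thus $\theta_{max}$ is locally convex on $I$, and being continuous on an interval it is convex on $I$. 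Concavity of $\theta_{min}$ is symmetric: pick $\alpha \in ((\theta_{max}(t_0)+\theta_{min}(t_0))/2,\, \theta_{min}(t_0) + \pi/2)$ so that the same identity gives $d_\infty(w,\beta(t)) = \alpha - \theta_{min}(t)$, and then Theorem \ref{conv} yields convexity of $-\theta_{min}$. The main obstacle here is not the convexity machinery but the combinatorial bookkeeping ensuring the correct eigenvalue realizes the maximum throughout a neighborhood; the strict bound $\theta_{max} - \theta_{min} < \pi$ is precisely what opens up the admissible window for $\alpha$.

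For optimality, I would give the $U_2$ example $u = e^{i\alpha_0 h}$ with $h = \begin{pmatrix}1 & 0\\ 0 & -1\end{pmatrix}$, $\alpha_0 \in (0, \pi/2)$, and $x = i\begin{pmatrix}0 & 1\\ 1 & 0\end{pmatrix}$. A direct trace/determinant computation shows $\beta(t) = ue^{tx}$ has unimodular eigenvalues $e^{\pm i\arccos(\cos\alpha_0\cos t)}$, so $\theta_{max}(t) = \arccos(\cos\alpha_0\cos t) = -\theta_{min}(t)$. Differentiating twice one obtains
$$\theta_{max}''(t) = \frac{\sin^2\alpha_0\,\cos\alpha_0\,\cos t}{(1-\cos^2\alpha_0\cos^2 t)^{3/2}},$$
whose sign is the sign of $\cos t$. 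Hence $\theta_{max}$ has an inflection at $t = \pi/2$, where $\theta_{max}(t) - \theta_{min}(t) = \pi$, and is strictly concave on any open subinterval of $(\pi/2, \pi)$. Choosing $\alpha_0$ small and $I$ a small open subinterval of $(\pi/2, \pi/2 + \varepsilon)$ makes $\theta_{max}-\theta_{min}$ arbitrarily close to $\pi$ from above while $\theta_{max}$ fails to be convex on $I$; this shows the strict bound $<\pi$ cannot be weakened.
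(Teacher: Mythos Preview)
Your proof is correct and follows essentially the same route as the paper: shift by a scalar unitary so that $d_\infty$ from the shifted basepoint equals $\theta_{\max}(t)-\alpha$ on a neighborhood, invoke Theorem~\ref{conv}, and pass from local to global convexity (the paper writes this as multiplying by $e^{ic\id}$ and measuring from $\id$, which by bi-invariance is the same as your distance from $w=e^{i\alpha}\id$, with $c=-\alpha$). Your optimality example is likewise the same as the paper's up to a change of generator $x$ and a reparametrization---the paper takes $\theta=\pi/2+\epsilon$ and $t$ near $0$, you take $\alpha_0\in(0,\pi/2)$ and $t$ just above $\pi/2$, both producing $\theta_{\max}(t)=\arccos(\cos\alpha_0\cos t)$ with a sign change of $\theta_{\max}''$ exactly where the gap crosses $\pi$; the explicit second-derivative formula you give is a nice addition, and the smallness of $\alpha_0$ is not actually needed.
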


\begin{proof}
Let $I$ be an interval as in the the statement of the proposition. If $t_0\in I$ then there is a $c\in \R$ and an open interval $J\subseteq I$ with $t_0\in I$ such that $-\pi/2<\theta_{min}(t)+c\leq \theta_{max}(t)+c<\pi/2$ and $\theta_{max}(t)+c>\vert \theta_{min}(t)+c \vert$ for $t\in J$. The first inequalities imply that the geodesic $\beta(t)=e^{ic\id}ue^{tx}$ lies in $B_\infty(\id,\pi/2)$ for $t\in J$ and the second inequality implies that 
$$\theta_{max}(t)+c=d_\infty(\id,e^{ic\id}ue^{tx})$$
for $t\in J$. By Theorem \ref{conv} $\theta_{max}(t)+c$ is convex in $J$. Since convexity is a local property we conclude that $\theta_{max}$ is convex in $I$. An analogous argument implies the concavity of $\theta_{min}$ in $I$. 

To prove the optimality of the bound $\pi$ consider the following curve in $U_2$:  

$$ue^{tx}=
 \left(\begin{array}{cc}
e^{i\theta} & 0 \\
\\
0 & e^{-i\theta}
\end{array}\right)
\left(\begin{array}{cc}
\cos(t) & \sin(t) \\
\\
-\sin(t) & \cos(t)
\end{array}\right).
$$
The eigenvalues of $ue^{tx}$ are
$$\cos(\theta)\cos(t)\pm i\sqrt{1-(\cos(\theta)\cos(t))^2}.$$
Taking $\theta=\pi/2+\epsilon$ and $t\in(-\epsilon,\epsilon)$ we see that the conclusion of the proposition does not hold for small $\epsilon>0$.
\end{proof}

\subsection{Radius of convexity}\label{sradconv}

We obtain $\pi/2$ as optimal radius of convexity.

\begin{defn}
A set $M\subseteq U$ is geodesically convex if for $u,v\in M$ there is a geodesic $\gamma$ joining $u$ and $v$ such that $\gamma\subseteq M$.
\end{defn}

\begin{prop}\label{geoconv}
If $p$ is an even integer and $u\in U$, then for each $r\leq\pi/2$ the set $B_p(u,r)$ is geodesically convex, and for each $r<\pi/2$ the set $B_p[u,r]$ is geodesically convex. 
The $\pi/2$ bound for the radius of convexity is optimal.
\end{prop}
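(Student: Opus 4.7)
The plan is to combine Theorem~\ref{strict} (strict convexity of $f(t) = d_p(u, \beta(t))^p$ along geodesics $\beta \subseteq B_p(u, \pi/2)$) with a first-exit argument, and to demonstrate optimality by a commuting example in a diagonal torus.

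Given $v, w \in B_p(u, r)$ with $r \leq \pi/2$, the triangle inequality combined with Proposition~\ref{compare} yields $d_\infty(v, w) \leq d_p(v, w) < 2r \leq \pi$, so Theorem~\ref{geodesicasp}(2) produces a unique minimizing geodesic $\beta \colon [0, 1] \to U$ from $v$ to $w$. Let $\phi(t) = d_p(u, \beta(t))$ and $f = \phi^p$. The goal is $\phi(t) < r$ throughout $[0,1]$. The core step is showing $\beta \subseteq B_p(u, \pi/2)$: once this is established, Theorem~\ref{strict} gives $f$ strictly convex on $[0, 1]$, so $f(t) < \max\{f(0), f(1)\} < r^p$ for $t \in (0, 1)$, yielding $\beta \subseteq B_p(u, r)$. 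The closed-ball version with $r < \pi/2$ follows by the same scheme applied to any $r' \in (r, \pi/2)$ and letting $r' \searrow r$.

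The main obstacle is to prove $\beta \subseteq B_p(u, \pi/2)$. Suppose for contradiction it fails, and set $T = \inf\{t : \phi(t) = \pi/2\}$ and $T' = \sup\{t : \phi(t) = \pi/2\}$, both in $(0, 1)$. Applying Theorem~\ref{strict} to the truncations $\beta|_{[0, T - \delta]}$ and $\beta|_{[T' + \delta, 1]}$ and passing to the limit $\delta \to 0^+$, one sees by continuity that $f$ is convex on each of $[0, T]$ and $[T', 1]$, and strictly convex on the open pieces. Since $f(0) < r^p \leq (\pi/2)^p = f(T)$, strict convexity forces the one-sided derivative $f'_-(T) > 0$; symmetrically $f'_+(T') < 0$. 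I would then derive a contradiction via a shortcut / variational argument: the strictly positive slope at $T$ (and negative slope at $T'$), together with the minimality of $\beta$, should allow one to construct a piecewise-geodesic deformation near $\beta(T)$ and $\beta(T')$ whose total length is strictly less than $L(\beta)$. Verifying that this perturbation is strictly shorter is the technically delicate point. An alternative, possibly cleaner, route is to invoke Theorem~\ref{conv} to obtain convexity of $\phi_\infty(t) = d_\infty(u,\beta(t))$ along $\beta$ via the inclusion $B_p(u, r) \subseteq B_\infty(u, r)$ of Proposition~\ref{compare}, which should confine $\beta$ below the critical radius and bypass the shortcut construction.

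For optimality at $r > \pi/2$, pick $\alpha \in (\pi/2, \min(r, \pi))$ and in $U_n$ with $n \geq 2$ take
$$u = \id, \quad v = \mathrm{diag}(e^{i\alpha}, 1, \ldots, 1), \quad w = \mathrm{diag}(e^{-i\alpha}, 1, \ldots, 1).$$
Then $d_p(u, v) = d_p(u, w) = \alpha < r$, so $v, w \in B_p(u, r)$. The unique short geodesic from $v$ to $w$ rotates the first diagonal entry through $-1$ (since $2(\pi - \alpha) < 2\alpha$), so its midpoint is $\mathrm{diag}(-1, 1, \ldots, 1)$, at $d_p$-distance $\pi > r$ from $u$. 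This midpoint lies outside $B_p(u, r)$, showing that the ball is not geodesically convex and confirming that $\pi/2$ is the optimal radius of convexity.
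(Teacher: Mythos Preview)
Your reduction to showing $\beta \subseteq B_p(u,\pi/2)$ and then applying Theorem~\ref{strict} is the right overall shape, but the argument for that containment has a genuine gap. The proposed ``shortcut/variational'' contradiction cannot work: $\beta$ is already the \emph{unique} length-minimizer between $v$ and $w$ by Theorem~\ref{geodesicasp}(1), so no piecewise-geodesic deformation can be strictly shorter. The slope information $f'_-(T)>0$, $f'_+(T')<0$ concerns the distance \emph{to the fixed point $u$}, not the length of $\beta$ itself, and there is no mechanism that converts one into the other. Your alternative via Theorem~\ref{conv} is circular: that theorem requires $\beta\subseteq B_\infty(u,\pi/2)$ as a hypothesis, which is precisely the kind of containment you are trying to establish; the inclusion $B_p(u,r)\subseteq B_\infty(u,r)$ from Proposition~\ref{compare} only controls the endpoints, not the whole geodesic.

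The paper sidesteps this circularity with a connectedness argument. For fixed $r<\pi/2$ it considers the set
\[
C=\{(v,w)\in B_p(\id,r)\times B_p(\id,r):\gamma_{v,w}\subseteq B_p(\id,r)\}
\]
and shows $C$ is nonempty, open and closed in the connected product $B_p(\id,r)\times B_p(\id,r)$. The key is the closedness step: a limit of geodesics in $B_p(\id,r)$ is a priori only known to lie in $B_p[\id,r]\subseteq B_p(\id,\pi/2)$, but this already places it inside the domain where Theorem~\ref{strict} applies, and strict convexity then rules out an interior tangency with the sphere $\{d_p=r\}$ since the endpoints are strictly inside. Thus the hypothesis of Theorem~\ref{strict} is secured \emph{by} the limiting procedure rather than assumed in advance, which is exactly what your first-exit argument is missing. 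The cases $r=\pi/2$ and the closed ball are then obtained by exhaustion, as you indicate.

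Your optimality example in $U_n$ with $n\geq 2$ is correct for $r\in(\pi/2,\pi)$, which is all that is needed; the paper uses the simpler $U_1\simeq S^1$ instead.
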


\begin{proof}
Since left translation is isometric we can assume that $u=\id$.
For $r<\pi/2$ define the set 
$$C=\{(u,v)\in B_p(\id,r)\times B_p(\id,r):\gamma_{u,v}\subseteq B_p(\id,r)\}.$$
Note that $B_p(\id,r)\times B_p(\id,r)$ is connected since $B_p(\id,r)=\exp(B_p(0,r))$ is connected. We will show that $C$ is non empty, open and closed in the space $B_p(\id,r)\times B_p(\id,r)$, and is therefore equal to this space.

If $u,v\in B_p(\id,r/2)$, then by the triangle inequality $\gamma_{u,v}\subseteq B_p(\id,r)$, so $C$ is not empty.

The set $C$ is open, otherwise there are $(u_n,v_n)_n \to (u,v)$, with $(u,v)\in C$, and $(t_n)_n\subseteq [0,1]$ such that $\gamma_{u_n,v_n}(t_n)\notin B_p(\id,r)$. We choose a convergent subsequence $t_{n_m}\to t'$, so that $\gamma_{u_{n_m},v_{n_m}}(t_{n_m})\to \gamma_{u,v}(t')\notin B_p(\id,r)$ which is a contradiction.

The set $C$ is closed, otherwise let $(u_n,v_n)_n$ be a sequence in $C$ which converges to $(u,v)\in (B_p(\id,r)\times B_p(\id,r))\cap C^c$. For all $t\in [0,1]$ we have $\gamma_{u_{n},v_{n}}(t)\to \gamma_{u,v}(t)$ and $d_p(\id,\gamma_{u_{n_m},v_{n_m}}(t))<r$ so that $d_p(\id,\gamma_{u,v}(t))\leq r$. Suppose there is a $t\in [0,1]$ such that  $d_p(\id,\gamma_{u,v}(t))= r$. Then the convexity of the map $t\mapsto d_p(\id,\gamma_{u,v}(t))^p$ is not satisfied, see Theorem \ref{strict}.

Therefore for $r<\pi/2$ the set $B_p(\id,r)$ is geodesically convex. The set $B_p(\id,\pi/2)$ is also geodesically convex, since for $u,v\in B_p(\id,\pi/2)$ we have that $u,v\in B_p(\id,r)$ for an $r<\pi/2$. 

If $u,v\in B_p[\id,r]$ for $r<\pi/2$ then $\gamma_{u,v}\subseteq B_p(\id,r')$ for all $r'$ such that $r<r'<\pi/2$. Taking $r'\to r$ we conclude that $\gamma_{u,v}\subseteq B_p[\id,r]$.

For the optimality of the bound take a ball of center $1$ and radius $\pi/2+\epsilon$ in the group $U_1\simeq S^1$. Then the geodesic $\gamma(t)=e^{ti}$ for $t\in [-3/2 \pi+\frac{1}{2}\epsilon,-\pi/2-\frac{1}{2}\epsilon]$ connects two points inside the ball, but it is not contained in the ball.   
\end{proof}

Letting $p\to \infty$ we can obtain an analogous result for the $d_\infty$ distance. 

\begin{prop}\label{geoconvunif}
If $u\in U$ then for each $r<\pi/2$ the sets $B_\infty[u,r]$ and $B_\infty(u,r)$ are geodesically convex. If $v_1,v_2\in B_\infty[u,\pi/2]$ and $d_\infty(v_1,v_2)<\pi$, then $\gamma_{v_1,v_2}\subseteq B_\infty[u,\pi/2]$.
\end{prop}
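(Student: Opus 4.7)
The plan is to reduce to Proposition \ref{geoconv} via the comparison $d_\infty \leq d_p \leq n^{1/p} d_\infty$ from Proposition \ref{compare}, letting $p \to \infty$ through even integers. By left invariance of $d_\infty$, I may assume $u = \id$ throughout.

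I first handle the closed ball. Fix $v_1, v_2 \in B_\infty[\id, r]$ with $r < \pi/2$. Proposition \ref{compare} gives $d_p(\id, v_i) \leq n^{1/p} r$, so $v_i \in B_p[\id, n^{1/p} r]$. Since $n^{1/p} \to 1$, I can choose an even integer $p$ large enough that $n^{1/p} r < \pi/2$, and Proposition \ref{geoconv} then yields $\gamma_{v_1, v_2}(t) \in B_p[\id, n^{1/p} r]$. Invoking $d_\infty \leq d_p$ I obtain $d_\infty(\id, \gamma_{v_1, v_2}(t)) \leq n^{1/p} r$, and taking $p \to \infty$ gives $d_\infty(\id, \gamma_{v_1, v_2}(t)) \leq r$, which is geodesic convexity of $B_\infty[\id, r]$.

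The open ball is then immediate: given $v_1, v_2 \in B_\infty(\id, r)$ with $r < \pi/2$, I pick $r'$ with $\max_i d_\infty(\id, v_i) < r' < r$ and apply the closed-ball step at radius $r'$ to conclude $\gamma_{v_1, v_2} \subseteq B_\infty[\id, r'] \subseteq B_\infty(\id, r)$.

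For the boundary case $r = \pi/2$ the closed-ball argument does not apply directly, since $n^{1/p} \cdot \pi/2 > \pi/2$, so I pass to the limit from inside. Write $v_i = \exp(y_i)$ with $\|y_i\|_\infty \leq \pi/2$ and set $v_i^\epsilon = \exp((1-\epsilon) y_i) \in B_\infty[\id, (1-\epsilon)\pi/2]$. Continuity gives $d_\infty(v_1^\epsilon, v_2^\epsilon) \to d_\infty(v_1, v_2) < \pi$, so for small $\epsilon$ the unique geodesic $\gamma_{v_1^\epsilon, v_2^\epsilon}$ is given by the explicit formula recorded after Theorem \ref{geodesicasp}, and $\gamma_{v_1^\epsilon, v_2^\epsilon}(t) \to \gamma_{v_1, v_2}(t)$ as $\epsilon \to 0$. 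Applying the closed-ball result to each $(v_1^\epsilon, v_2^\epsilon)$ gives $d_\infty(\id, \gamma_{v_1^\epsilon, v_2^\epsilon}(t)) \leq (1-\epsilon) \pi/2$, and the limit produces the desired bound. The main obstacle is this last step: I rely on continuity of $\log$ on $\{w \in U : -1 \notin \spec(w)\}$, which is exactly why the hypothesis $d_\infty(v_1, v_2) < \pi$ (equivalent to $-1 \notin \spec(v_1^{-1} v_2)$) is needed to define $\gamma_{v_1, v_2}$ uniquely and as the pointwise limit of the approximating geodesics.
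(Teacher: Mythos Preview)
Your proof is correct and follows essentially the same route as the paper: reduce to the $d_p$ case via Proposition~\ref{compare}, apply Proposition~\ref{geoconv}, and let $p\to\infty$; then handle the open ball by shrinking the radius, and the boundary case $r=\pi/2$ by approximating $v_1,v_2$ from inside and using continuity of the explicit geodesic formula. The only cosmetic difference is that the paper simply asserts the existence of approximating sequences $v_i^{(n)}\in B_\infty[\id,r_n]$ with $r_n\nearrow\pi/2$, whereas you construct them explicitly via radial contraction $v_i^\epsilon=\exp((1-\epsilon)y_i)$; this is a harmless refinement of the same idea.
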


\begin{proof}
Since left translation is isometric we assume that $u=\id$. Let $r<\pi/2$ and $u,v\in B_\infty[u,r]$. For sufficiently high even integers $p$ we have $n^{1/p}r<\pi/2$ and $u,v\in B_\infty[u,r]\subseteq B_p[u,n^{1/p}r]$ by Proposition \ref{compare}. By Proposition \ref{geoconv} we get $\gamma_{u,v}\subseteq B_p[u,n^{1/p}r]$. Letting $p$ tend to infinity we have $n^{1/p}r\to r$ so that $\gamma_{u,v}\subseteq B_p[u,r]$.
 
If $u,v\in B_\infty(\id,r)$ for $r<\frac{\pi}{2}$, then $u,v\in B_\infty[\id,r']$ for $r'<r$. By the previous paragraph $\gamma_{u,v}\subseteq B[\id,r']\subseteq B[\id,r]$.

To prove the last statement assume that $u,v\in B_\infty[\id,\pi/2]$ and $d_\infty(u,v)<\pi$. We can chose sequences $u_n\to u$ and $v_n\to v$ such that $d_\infty(u_n,v_n)<\pi$, and $u_n,v_n\in B_\infty[\id,r_n]$ and $r_n<\pi/2$ with $r_n\to \pi/2$. Then by the first statement of the proposition $\gamma_{u_n,v_n}\subseteq B_\infty[\id,r_n]\subseteq B_\infty[\id,\pi/2]$. Since $\gamma_{u_1,u_2}(t)=u_1\exp(t\log(u_1^{-1}u_2))$ the geodesics depend continuously on the endpoints, and in the limit $\gamma_{u,v}\subseteq B_\infty[\id,\pi/2]$.
\end{proof}

\begin{cor}
Let $c\geq 0$ be a real number, $v$ a unitary matrix, and $x$ a skew-adjoint matrix such that $\|x\|_\infty<\pi$, $v+v^{-1}\geq c\id$ and $(ve^x)+(ve^x)^{-1}\geq c\id$. Then 
$$(ve^{tx})+(ve^{tx})^{-1}\geq c\id$$
for all $t\in [0,1]$. 
\end{cor}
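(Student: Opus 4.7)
The plan is to recast the two positivity hypotheses as the statement that $v$ and $ve^x$ lie in a common closed $d_\infty$-ball of radius at most $\pi/2$ around the identity, and then invoke the geodesic convexity of that ball from Proposition \ref{geoconvunif}. Write $c=2\cos\alpha$ with $\alpha\in[0,\pi/2]$; the case $c>2$ is vacuous and $c=2$ collapses to $v=ve^x=\id$. For a unitary $w$ with spectrum $\{e^{i\theta_j}\}$, the operator $w+w^{-1}=w+w^{*}$ has eigenvalues $2\cos\theta_j$, so $w+w^{-1}\geq c\id$ is equivalent to $\cos\theta_j\geq\cos\alpha$ for every $j$, which in turn is equivalent to $\spec(w)\subseteq\exp(i[-\alpha,\alpha])$. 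By the identity displayed at the end of Section \ref{prel}, this is precisely $w\in B_\infty[\id,\alpha]$.

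With this dictionary in hand, the hypotheses become $v,ve^x\in B_\infty[\id,\alpha]$ with $\alpha\leq\pi/2$. Since $\|x\|_\infty<\pi$, Theorem \ref{geodesicasp} gives that $\gamma(t)=ve^{tx}$ is the unique minimal geodesic from $v$ to $ve^x$, and $d_\infty(v,ve^x)\leq L_\infty(\gamma)=\|x\|_\infty<\pi$. If $\alpha<\pi/2$, the closed ball $B_\infty[\id,\alpha]$ is geodesically convex by Proposition \ref{geoconvunif}, so $\gamma\subseteq B_\infty[\id,\alpha]$. In the boundary case $\alpha=\pi/2$ (that is, $c=0$), the final clause of Proposition \ref{geoconvunif} applies using the bound $d_\infty(v,ve^x)<\pi$, and again yields $\gamma\subseteq B_\infty[\id,\pi/2]$.

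Translating back via the same spectral characterization, $ve^{tx}\in B_\infty[\id,\alpha]$ means $(ve^{tx})+(ve^{tx})^{-1}\geq 2\cos(\alpha)\id=c\id$ for every $t\in[0,1]$. I do not foresee any substantive obstacle: the heart of the argument is recognizing that the algebraic positivity condition $w+w^{-1}\geq c\id$ is nothing but membership in a closed $d_\infty$-ball of radius $\arccos(c/2)\leq\pi/2$ around $\id$, after which Proposition \ref{geoconvunif} delivers the conclusion directly, with the delicate point being the boundary value $\alpha=\pi/2$ where the strict inequality $\|x\|_\infty<\pi$ is exactly what unlocks the last clause of that proposition.
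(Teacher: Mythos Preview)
Your proof is correct and follows essentially the same approach as the paper: both translate the condition $w+w^{-1}\geq c\id$ into membership in the closed ball $B_\infty[\id,\arccos(c/2)]$ and then invoke Proposition \ref{geoconvunif}. The only cosmetic difference is that the paper phrases the spectral translation via the numerical-range characterization $a+a^*\geq 0 \Leftrightarrow W(a)\subseteq\{b_1+b_2 i:b_1\geq 0\}$, whereas you go directly through the eigenvalues $2\cos\theta_j$ of $w+w^{-1}$; your route is arguably more direct.
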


\begin{proof}
This follows from Proposition \ref{geoconvunif}, the equality
$$B_\infty[\id,r]=\{u\in U:\spec(u)\subseteq \exp(i[-r,r])\}$$
and the following property of the numerical range: a matrix $a$ has numerical range in the half-space $\{b_1+b_2 i:b_1\geq 0\}$ if and only if $a+a^*\geq 0$.
\end{proof}

\begin{rem}
The proof of Proposition \ref{geoconv} and \ref{geoconvunif} also hold in the infinite dimensional context of \cite{pschatten}. To generalize Proposition \ref{geoconvunif} one takes a sequence of projections with finite dimensional range such that $p_n\to \id$ in the strong operator topology. For $x$ in the p-Schatten class $p_nxp_x$ tends to $x$ in the operator norm so that $e^{p_nxp_n}\to e^x$ in the operator norm.
\end{rem}

The second statement in Proposition \ref{geoconvunif} implies the geodesic convexity of the symmetries, that is, of the unitaries $u$ with $\spec(u)\subseteq \{1,-1\}$. 

\begin{prop}\label{geoconvref}
If $u,v$ are symmetries such that $d_\infty(u,v)<\pi$, then the unique geodesic $\gamma_{u,v}$ consists of reflections.
\end{prop}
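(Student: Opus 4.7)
The plan is to realize the set of symmetries as the intersection of two closed $d_\infty$-balls of radius $\pi/2$ and then apply the second assertion of Proposition \ref{geoconvunif} to each ball. First I would verify the identification
$$\{w \in U : \spec(w) \subseteq \{1,-1\}\} \;=\; B_\infty[i\id,\pi/2] \cap B_\infty[-i\id,\pi/2].$$
By left-invariance $d_\infty(\pm i\id, w) = d_\infty(\id, \mp i w)$, and multiplication by $\mp i$ rotates eigenvalues by $\mp\pi/2$. Combined with the description $B_\infty[\id,\pi/2] = \{w : \spec(w)\subseteq \exp(i[-\pi/2,\pi/2])\}$ recalled in Section \ref{prel}, membership in $B_\infty[i\id,\pi/2]$ forces $\spec(w) \subseteq \exp(i[0,\pi])$ and membership in $B_\infty[-i\id,\pi/2]$ forces $\spec(w) \subseteq \exp(i[-\pi,0])$. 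The intersection of these two closed half-circles is exactly $\{1,-1\}$, giving the characterization.

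Once this is in place the conclusion is immediate. Since $u,v$ are symmetries they lie in both $B_\infty[i\id,\pi/2]$ and $B_\infty[-i\id,\pi/2]$, and the hypothesis $d_\infty(u,v) < \pi$ lets me invoke the second statement of Proposition \ref{geoconvunif} twice: with center $i\id$ to obtain $\gamma_{u,v} \subseteq B_\infty[i\id,\pi/2]$, and with center $-i\id$ to obtain $\gamma_{u,v} \subseteq B_\infty[-i\id,\pi/2]$. Intersecting, every point of $\gamma_{u,v}$ has spectrum contained in $\{1,-1\}$, i.e., is a symmetry. Uniqueness of $\gamma_{u,v}$ follows from Theorem \ref{geodesicasp}(2) together with the fact that $d_\infty(u,v)<\pi$ forces $\|u-v\|_\infty < 2$.

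The only substantive step is choosing the correct pair of centers. The naive attempt to characterise symmetries as $B_\infty[\id,\pi/2] \cap B_\infty[-\id,\pi/2]$ fails because a symmetry with any $-1$ eigenvalue has $d_\infty(\id,w)=\pi$ rather than at most $\pi/2$. Rotating the centers by a quarter turn to $\pm i\id$ places the eigenvalues $\pm 1$ at exact distance $\pi/2$ from each center, so both balls just barely contain every symmetry and their intersection collapses precisely to the set of symmetries.
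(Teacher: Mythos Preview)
Your proof is correct and follows exactly the paper's approach: realize the symmetries as $B_\infty[i\id,\pi/2]\cap B_\infty[-i\id,\pi/2]$ and invoke the second statement of Proposition~\ref{geoconvunif} for each ball. Your write-up is in fact more detailed than the paper's, supplying the spectral computation for the two balls, the uniqueness justification via Theorem~\ref{geodesicasp}(2), and a useful remark on why the centers $\pm i\id$ (rather than $\pm\id$) are the right choice.
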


\begin{proof}
We note that $B[i\id,\pi/2]=\{u\in U:\spec(u)\subseteq \exp(i[0,\pi])\}$ and that $B[-i\id,\pi/2]=\{u\in U:\spec(u)\subseteq \exp(i[-\pi,0])\}$. Hence 
$$B[i\id,\pi/2]\cap B[-i\id,\pi/2]=\{u\in U:\spec(u)\subseteq \{1,-1\}\}$$
is the space of symmetries. Since by Proposition \ref{geoconvunif} $\gamma_{u,v} \subseteq B[i\id,\pi/2]$ and $\gamma_{u,v}\subseteq B[-i\id,\pi/2]$ the proposition follows.
\end{proof}

\subsection{Strong convexity}\label{slambdaconv}

We prove strong convexity properties of the function $d_2(\cdot,u)$. A real valued function on an interval is strongly convex if there is a $\lambda>0$ such that one can touch the graph of the function from below by a translation of the parabola $y = \lambda x^2$. While smooth convex functions are characterized by the inequality $f''\geq 0$, for $\lambda$-convex functions this inequality turns into $f''\geq \lambda$, see Example 4.4.4. in \cite{metric}.

\begin{defn}
A function $f:[0,l]\to \R$ is $\lambda$-convex if $f(t)-\lambda t^2$ is convex for a $\lambda>0$. In this case it is called strongly convex.
\end{defn}

Note that for $p\geq 2$ the space $(U,d_p)$ is a length space.

\begin{defn}[{\cite[Definition 9.2.17]{metric}}]
Let $X$ be a length space and $\lambda >0$. A function
$f:X\to \R$ is $\lambda$-convex if for any unit-speed geodesic $\gamma$ in $X$ the function
$t\mapsto f(\gamma(t))-\lambda t^2$ is convex.
\end{defn}

The following result is an adaptation of Theorem 3.6 in \cite{pschatten} for the case $p=2$, following Remark 3.7 of this article.

\begin{thm}\label{strong}
Let $u$ be an element of a finite dimensional unitary group $U$, let $0<r<\pi/2$, and let $\beta\subseteq B_\infty [u,r]$ be a non constant geodesic. Then the function 
$$f(t)=d_2(u,\beta(t))^p$$ 
satisfies $f''\geq \lambda$ for
$$\lambda =c^2\frac{\sin(2r)}{2r} >0,$$ where $c>0$ is the speed of the geodesic in the $d_2$ metric.
\end{thm}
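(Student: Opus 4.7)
My plan is to compute $f''(t)$ explicitly in logarithmic coordinates and bound it below. By the bi-invariance of $d_2$, I may assume $u = \id$. Since $r < \pi/2 < \pi$ and $\beta(t) \in B_\infty[\id, r]$, each $\beta(t)$ has a unique skew-adjoint logarithm $z(t) \in \u$ with $\|z(t)\|_\infty \leq r$; by Theorem~\ref{geodesicasp}(1), $f(t) = \|z(t)\|_2^2 = -\Tr(z(t)^2)$. Writing the geodesic as $\beta(t) = \beta(0) e^{tw}$ with $w \in \u$ and $\|w\|_2 = c$, and combining $\frac{d}{dt}\beta(t) = \beta(t) w$ with the classical derivative of the matrix exponential
$$\frac{d}{dt} e^{z(t)} = e^{z(t)} \cdot \frac{1 - e^{-\mathrm{ad}_{z(t)}}}{\mathrm{ad}_{z(t)}}\bigl(z'(t)\bigr),$$
I obtain $z'(t) = h(\mathrm{ad}_{z(t)})(w)$ with $h(x) = x/(1 - e^{-x})$.

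A key simplification is the identity $\Tr(z \cdot \mathrm{ad}_z^k(y)) = 0$ for every $k \geq 1$ (a consequence of cyclicity of the trace). This collapses the power series $h(\mathrm{ad}_z) = \sum_k h_k \mathrm{ad}_z^k$ to its constant term when paired with $z$, yielding $\Tr(z z') = \Tr(z w)$ and hence $f'(t) = -2 \Tr(z(t) w)$. Differentiating once more with $w$ constant produces the concise expression $f''(t) = -2 \Tr\bigl(h(\mathrm{ad}_{z(t)})(w) \cdot w\bigr)$.

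The core computation is then to diagonalise $z(t) = i\,\mathrm{diag}(\theta_1, \ldots, \theta_n)$ with $|\theta_j| \leq r$, so that $\mathrm{ad}_{z(t)}$ acts on the elementary matrix $E_{jk}$ by $i(\theta_j - \theta_k)$. Expanding $w = \sum w_{jk} E_{jk}$ in this basis, using $w_{kj} = -\overline{w_{jk}}$, and pairing the contributions of $(j,k)$ with $(k,j)$ via the identity $h(ix) + h(-ix) = x \cot(x/2)$, I arrive at
$$f''(t) = 2 \sum_j |w_{jj}|^2 + 2 \sum_{j<k} |w_{jk}|^2 \cdot (\theta_j - \theta_k) \cot\!\Bigl(\tfrac{\theta_j - \theta_k}{2}\Bigr).$$
On $(0, \pi)$ the map $x \mapsto x \cot(x/2)$ is positive and decreasing, so its value at each $\theta_j - \theta_k \in [-2r, 2r]$ is at least $2r \cot r$; the elementary inequality $r \geq \sin r$ then gives $2r \cot r \geq \sin(2r)/(2r)$. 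Since $\sin(2r)/(2r) \leq 1 \leq 2$, pulling out the common factor $\sin(2r)/(2r)$ and using $\sum_j |w_{jj}|^2 + 2 \sum_{j<k} |w_{jk}|^2 = \|w\|_2^2 = c^2$ yields $f''(t) \geq c^2 \sin(2r)/(2r)$.

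The main obstacle I expect is this spectral identification of $f''(t)$: correctly applying the formula for the derivative of the matrix exponential, using cyclicity of the trace to eliminate the cross terms arising from positive powers of $\mathrm{ad}_z$, and executing the pairing $(j,k) \leftrightarrow (k,j)$ to produce the real, manifestly positive expression $x \cot(x/2)$. Once that spectral formula is in hand, the trigonometric lower bound is elementary.
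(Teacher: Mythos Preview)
Your argument is correct, and it reaches the same bound as the paper, but by a genuinely different route.

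Both proofs agree up through the computation of $f'$: after reducing to $u=\id$, writing $\beta(t)=e^{z(t)}$ and using the differential of $\exp$ together with trace cyclicity to collapse $\Tr(z\cdot\mathrm{ad}_z^k(w))$, one obtains $f'(t)=-2\Tr(z(t)w)$ and hence $f''(t)=-2\Tr(z'(t)\,w)$. From here the paper proceeds geometrically: it rewrites $f''(s)=\int_0^1 H_{w_s}(\delta_s(0),\delta_s(t))\,dt$ for the conjugation curve $\delta_s(t)=e^{-tw_s}\dot w_s e^{tw_s}$ on the sphere of radius $R_s=\|\dot w_s\|_2$ in $(\u,H_{w_s})$, bounds the subtended angle by arc length over radius (using $Q_a([b,a])\le 4\|a\|_\infty^2 Q_a(b)$), and integrates $\cos(2t\|w_s\|_\infty)$ to produce $\sin(2r)/(2r)$. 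You instead diagonalise $z(t)$, identify the eigenvalues $i(\theta_j-\theta_k)$ of $\mathrm{ad}_{z(t)}$, and obtain the explicit spectral formula
\[
f''(t)=2\sum_j|w_{jj}|^2+2\sum_{j<k}|w_{jk}|^2\,(\theta_j-\theta_k)\cot\!\Bigl(\tfrac{\theta_j-\theta_k}{2}\Bigr),
\]
then invoke the monotonicity of the even function $x\mapsto x\cot(x/2)$ on $[0,\pi)$ together with $2r\cot r\ge \sin(2r)/(2r)$. Your approach is more elementary and self-contained (no appeal to the spherical estimate or to the exponential-metric-decreasing lemma the paper cites), and it yields a pointwise-sharper intermediate expression; the paper's argument, on the other hand, is coordinate-free and transfers verbatim to infinite-dimensional $p$-Schatten unitary groups, which is the setting the authors ultimately care about. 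One small point worth making explicit in your write-up: the eigenbasis of $z(t)$ and hence the coefficients $w_{jk}$ depend on $t$, but since the final bound uses only $\|w\|_2^2=c^2$, this causes no difficulty.
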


\begin{proof}
We assume that $u=\id$ and we use the same notation as in the proof of Theorem 3.6 in \cite{pschatten}. 
Let $v,z\in\u$ such that $\beta(s)=e^ve^{sz}$. Let $w_s=\log(e^ve^{sz})$, and $\gamma_s(t)=e^{tw_s}$. Since $d_\infty (\id,\beta(s))=\|\dot{w}_s\|_\infty<\pi/2$ the curve $\gamma_s$ is a short geodesic in the $d_2$ metric joining $\id$ and $\beta(s)$ of length $d_2 (\id,\beta(s))=\|\dot{w}_s\|_2$. Then $f(s)=\|\dot{w}_s\|_2^2=-\Tr(w_s^2)$, hence
$$f'(s)=-2\Tr(w_s\dot{w}_s)=H_{w_s}(\dot{w}_s,w_s).$$
We denote with $H_a(\cdot,\cdot)$ the Hessian at $a\in\u$ of the function $\|\cdot\|^2_2$, and $Q_a(\cdot)$ its associated quadratic form. 

Using the formula for the differential of the exponential we get $e^{-w_s}d\exp_{w_s}(\dot{w}_s)=z$, namely 
$$z=\int_0^1 e^{-tw_s}\dot{w}_s e^{tw_s} dt.$$
Thus
$$\Tr(w_s\dot{w}_s)=\int_0^1 \Tr(w_s e^{-tw_s}\dot{w}_s e^{tw_s})dt=\Tr(zw_s).$$
Hence
$$f''(s)=-2\Tr(\dot{w}_s z)=H_{w_s}(\dot{w}_s,z),$$
and if we put $\delta_s(t)= e^{-tw_s}\dot{w}_s e^{tw_s}$, then
$$f''(s)=\int_0^1 -2\Tr(\delta_s(0)\delta_s(t))dt=\int_0^1 H_{w_s}(\delta_s(0)\delta_s(t))dt$$
We define $R_s^2=Q_{w_s}(\dot{w}_s)=\|\dot{w}_s\|_2^2>0$ and note that $\delta_s$ lies in the sphere of radius $R_s$ of the Hilbert space $\u$ endowed with the inner product $H_{w_s}$, hence
$$H_{w_s}(\delta_s(0),\delta_s(t))=R_s^2\cos(\alpha_s(t)),$$
where $\alpha_s(t)$ is the angle subtended by $\delta_s(0)$ and $\delta_s(t)$. If $L_0^t(\delta_s)$ is the length in the sphere of the curve $\delta_s$ from $\delta_s(0)$ to $\delta_s(t)$ then
\begin{align*}
R_s\alpha_s(t)&\leq L_0^t(\delta_s)=\int_0^tQ_{w_s}^\frac{1}{2}( e^{-tw_s}[w_s,\dot{w}_s] e^{tw_s})dt)\\
&=\int_0^tQ_{w_s}^\frac{1}{2}([w_s,\dot{w}_s])dt=tQ_{w_s}^\frac{1}{2}([w_s,\dot{w}_s]).
\end{align*}
By Property 1. of Remark 3.5 in \cite{pschatten} which states that $Q_a([b,a])\leq 4\|a\|_\infty ^2Q_a(b)$ we see that $Q_{w_s}([w_s,\dot{w}_s])\leq 4\|w_s\|_\infty^2R_s^2$. Hence
$$R_s\alpha_s(t)\leq R_s2t\|w_s\|_\infty<R_s\pi.$$
So
$$\cos(\alpha_s(t))\geq\cos(2t\|w_s\|_\infty)$$
and integrating with respect to the $t$-variable we get
$$f''(s)\geq R_s^2\frac{\sin(2\|w_s\|_\infty)}{2\|w_s\|_\infty}\geq\|z\|_2\frac{\sin(2r)}{2r}>0,$$
since from the exponential metric decreasing property in Lemma 3.3 of \cite{pschatten} $R_s^2=\|\dot{w}_s\|_2^2\geq \|z\|_2^2$, where $\|z\|_2$ is the speed of the geodesic.
\end{proof}

\section{Existence of circumcenters}\label{scirc}

In this section we prove the existence of circumcenters for subsets of radius less than $\pi/2$ in the $d_p$ metric for finite subsets and in perturbed $d_p$ metrics in general. We show that this radius is optimal. We prove the results in geodesic subspaces. The existence result is based on the following existence result of minimizers for $\lambda$-convex functions 

\begin{prop}[{\cite[Proposition 9.2.20]{metric}}]\label{min}
Let $X$ be a complete space with a strictly intrinsic
metric, and $f:X\to \R$ a continuous $\lambda$-convex function for a $\lambda>0$ which is bounded from below. Then $f$ has a unique minimum point.
\end{prop}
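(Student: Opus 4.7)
The plan is to prove uniqueness and existence separately, both by the same mechanism: comparing the value of $f$ at the midpoint of a geodesic against the endpoint values via the $\lambda$-convexity inequality, which forces a sharp quadratic bound on the distance in terms of the value gap.

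For uniqueness, suppose $x_1\neq x_2$ both minimize $f$, with $f(x_1)=f(x_2)=m$. Since the metric is strictly intrinsic, pick a unit-speed geodesic $\gamma:[0,L]\to X$ with $L=d(x_1,x_2)>0$ joining them. The function $g(t)=f(\gamma(t))-\lambda t^2$ is convex on $[0,L]$, so
$$g(L/2)\le \tfrac{1}{2}\bigl(g(0)+g(L)\bigr)=m-\tfrac{\lambda L^2}{2}.$$
Rearranging gives $f(\gamma(L/2))\le m-\lambda L^2/4<m$, contradicting that $m$ is the infimum. Hence any minimum point is unique.

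For existence, set $m=\inf_X f$, which is finite by hypothesis, and take a minimizing sequence $(x_n)$ with $f(x_n)\to m$. The claim is that $(x_n)$ is Cauchy. Given $n,k$, use the strictly intrinsic property to select a unit-speed geodesic $\gamma_{n,k}:[0,L_{n,k}]\to X$ with $L_{n,k}=d(x_n,x_k)$. Applying the same midpoint inequality to $g(t)=f(\gamma_{n,k}(t))-\lambda t^2$ and using $f(\gamma_{n,k}(L_{n,k}/2))\ge m$, we obtain
$$\tfrac{\lambda L_{n,k}^2}{4}\le \tfrac{f(x_n)+f(x_k)}{2}-m.$$
As $n,k\to\infty$ the right-hand side tends to $0$, so $L_{n,k}\to 0$; thus $(x_n)$ is Cauchy. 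By completeness it converges to some $x^\ast\in X$, and continuity of $f$ yields $f(x^\ast)=m$, so $x^\ast$ is a minimum. Uniqueness was already established.

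The substance of the argument is the single quadratic estimate $d(x_n,x_k)^2\le \frac{2}{\lambda}(f(x_n)+f(x_k)-2m)$, which is immediate from $\lambda$-convexity once a midpoint on a true geodesic is available; the only point demanding care is the use of a \emph{strictly} intrinsic (rather than merely intrinsic) metric, ensuring that exact minimizing geodesics, not just almost-minimizing curves, can be plugged into the convexity inequality. I do not anticipate a serious obstacle beyond this; the rest is the routine completeness-plus-continuity wrap-up.
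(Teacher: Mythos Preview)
Your proof is correct; it is the standard argument for this result. Note, however, that the paper does not supply its own proof of this proposition: it is quoted verbatim as Proposition~9.2.20 from \cite{metric} and used as a black box in the proofs of Theorems~\ref{thmcirc3} and~\ref{thmcirc2}. There is therefore nothing in the paper to compare against beyond the citation itself. Your argument---midpoint inequality from $\lambda$-convexity to force the quadratic bound $d(x_n,x_k)^2\le \frac{2}{\lambda}\bigl(f(x_n)+f(x_k)-2m\bigr)$, hence Cauchy, hence convergent by completeness, with the same estimate giving uniqueness---is exactly the proof one finds in the cited reference, so your write-up would serve perfectly well as a self-contained replacement for the citation.
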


Complete geodesic spaces with $\lambda$-convex squares of distance functions have well defined circumcenters for its bounded subsets, see Proposition 9.2.24 in \cite{metric}. The main results of the article are stated in terms of geodesic subsets, the following lemma is the basic technical lemma that ensures that this can be done.

\begin{defn}\label{defgeod}
We call $M\subseteq U$ a geodesic subset if all points of $M$ are connected by a piecewise smooth path, and if for all $u,v\in M$ such that $d_\infty(u,v)<\pi$ the unique geodesic $\gamma_{u,v}$ in $U$ which joins $u$ and $v$ is contained in $M$.
\end{defn}

\begin{lem}\label{metricgeosub}
Let $M$ be a geodesic subset of $(U,d_p)$ for $p\geq 2$. Then $(M,d_p)$ has a well defined length structure, and for $r<\pi/2$ and $u$ in $M$ 
$$B_M(u,r)=B_U(u,r)\cap M.$$
The interior metric and geodesics of $M$ and $U$ agree on this set.
\end{lem}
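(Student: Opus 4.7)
The plan is to use the defining property of geodesic subsets (Definition \ref{defgeod}) together with the inequality $d_\infty \leq d_p$ from Proposition \ref{compare} to transport ambient geodesics into $M$, thereby forcing the $M$-intrinsic distance to agree with the restriction of the ambient distance on small balls.

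First I would equip $M$ with a length structure: since $M$ is path-connected by piecewise smooth curves (condition (a) of Definition \ref{defgeod}), the Finsler length $L_p$ restricted to such curves in $M$ gives a well-defined rectifiable distance $d_p^M$. The trivial inequality $d_p^M \geq d_p$ follows, as any curve admissible for $d_p^M$ is admissible for $d_p$.

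Next, fix $u \in M$ and $r < \pi/2$. The inclusion $B_M(u,r) \subseteq B_U(u,r) \cap M$ is immediate from $d_p^M \geq d_p$. For the reverse inclusion, take $v \in B_U(u,r) \cap M$. Then $d_\infty(u,v) \leq d_p(u,v) < r < \pi/2 < \pi$, so by Theorem \ref{geodesicasp}(2) the unique minimizing geodesic $\gamma_{u,v}$ in $U$ exists, and condition (b) of Definition \ref{defgeod} places it inside $M$. Since $L_p(\gamma_{u,v}) = d_p(u,v) < r$, this curve witnesses $d_p^M(u,v) \leq d_p(u,v) < r$, so $v \in B_M(u,r)$ and moreover $d_p^M(u,v) = d_p(u,v)$.

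For the last claim, take any $v,w \in B_M(u,r)$. By the triangle inequality in $U$, $d_p(v,w) \leq d_p(u,v) + d_p(u,w) < 2r < \pi$, hence $d_\infty(v,w) < \pi$, and the same argument supplies a unique geodesic $\gamma_{v,w}$ in $U$ lying in $M$ with $L_p(\gamma_{v,w}) = d_p(v,w)$. This yields $d_p^M(v,w) = d_p(v,w)$, and any minimizing curve in $M$ between $v$ and $w$ then has the same length as $\gamma_{v,w}$, so by the uniqueness clause of Theorem \ref{geodesicasp}(2) it coincides with $\gamma_{v,w}$. The main technical point to track is that every application of condition (b) must occur in the regime $d_\infty < \pi$; this is precisely what forces the radius bound $r < \pi/2$, and it is secured by $d_\infty \leq d_p$ from Proposition \ref{compare}.
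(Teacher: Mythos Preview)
Your proof is correct and follows essentially the same approach as the paper's: both establish the trivial inclusion $B_M(u,r)\subseteq B_U(u,r)\cap M$ from $d_p^M\geq d_p$, obtain the reverse inclusion by invoking Definition~\ref{defgeod} to place the ambient geodesic $\gamma_{u,v}$ inside $M$, and then use the triangle inequality $d_p(v,w)<2r<\pi$ to repeat the argument for arbitrary pairs in the ball. Your write-up is somewhat more careful than the paper's in tracking the hypothesis $d_\infty<\pi$ via Proposition~\ref{compare} and in citing Theorem~\ref{geodesicasp} for uniqueness of geodesics, but the underlying argument is the same.
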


\begin{proof}
Let $d_M$ and $d_U$ be the metrics of $M$ and $U$ respectively. Since $M\subseteq U$ we have $d_U\leq d_M$. Therefore the inclusion $\subseteq$ is straightforward. To prove the inclusion $\supseteq$ we note that if $v\in B_U(u,r)\cap M$ then Definition \ref{defgeod} asserts that the geodesic in $U$ joining $u$ and $v$ lies in $M$, so $d_M(u,v)<r$.

To prove that the two metrics agree on this set note that if $u_1,u_2$ in $B_M(u,r)$ then $d(u_1,u_2)<\pi$ so that the geodesic in $U$ joining $u_1$ and $u_2$ is contained in $M$ by definition. We conclude that $d_M(u_1,u_2)=d_U(u_1,u_2)$.     
\end{proof}


The special unitary and orthogonal groups, and Grassmannians are examples of geodesic subsets of $U$.

\begin{ex}\label{exgrass}
For $0<m<n$ we consider the Grassmannian $\Gr_{m,n}$ of $m$ dimensional subspaces of the space $\C^n$. For each subspace we consider de projection $P$ onto this subspace. We identify the space of projections with a space of symmetries via the map
$$P\mapsto 2P-\id$$
and we denote $e_P=2P-\id$. Each $e_P$ is a unitary operator so that an injection 
$$\Gr_{m,n}\to U$$
is defined. The Grassmannian $\Gr_{m,n}$ is a geodesic subspace of $U$ since if $d_p(e_P,e_Q)<\pi$ then $d_\infty(e_P,e_Q)<\pi$, so there is a geodesic $\gamma_{e_P,e_Q}$ in $U$ connecting $e_P$ and $e_Q$. By Proposition \ref{geoconvref} this geodesic lies in the space of symmetries. See also \cite{portarecht}.  
\end{ex}

\begin{ex}
The special unitary group $SU$ is a geodesic subspace of $U$. Let $u\in SU$ be such that $d_p(\id,u)<\pi/2$ in $U$. Then there is a geodesic $\gamma:[0,1]\to U$ given by $e^{tx}$ which joins $\id$ and $u=e^x$. Note that $\|x\|_p=d_p(\id,u)<\pi$ so that $\|x\|_\infty<\pi$. Also
$$e^{\Tr(x)}=\det(e^x)=1$$
so that $\Tr(x)\in 2\pi i\Z$. Since $\|x\|_\infty<\pi$ we see that $\Tr(x)=0$ which implies  $\gamma\subseteq SU$.
The exponential map is surjective since $SU$ is compact and connected.
\end{ex}

\begin{ex}
The special orthogonal group $SO$ is a geodesic subspace of $U$. It consists of the elements of $SU$ with real matrix entries, that is, if $c$ is the complex conjugation operator then $u\in SO$ if and only if $u\in SU$ and $cuc=u$. Let $u\in SO$ be such that $d_p(\id,u)<\pi/2$ in $U$. Then there is a geodesic $\gamma:[0,1]\to SU$ given by $e^{tx}$ which joins $\id$ and $u=e^x$. Note that $\|x\|_\infty\leq\|x\|_p<\pi$. Since $ce^xc=e^{cxc}=e^x$ and $\|cxc\|_\infty=\|x\|_\infty<\pi$ we see that $cxc$ and $x$ are in the domain of injectivity of the exponential so that $x=cxc$. Therefore $x$ has real entries and $\gamma\subseteq SO$.
The exponential map is surjective since $SO$ is compact and connected.
\end{ex}

\begin{defn}
Given a metric space $(M,d)$ and a subset $A\subseteq M$. A closed ball of minimal radius among the balls containing $A$ is called a circumscribed ball of $A$, its center a circumcenter of $A$. The radius of these closed balls is called the circumradius of $A$, which is
$$\radius(A) = \inf\{r:\mbox{ there is } c\in M \mbox{ such that } A\subseteq B[c,r]\}.$$
\end{defn}

\begin{defn}
We define the function $f^A:M\to \R$ by
$$f^A(u)=\sup_{a\in A}d(u,a).$$
\end{defn}

The proof of the next lemma is straightforward.

\begin{lem}
We have $\radius(A)=\inf_{u\in M}f^A(u)$ and the minimizers of $f^A$ are the circumcenters of $A$. 
\end{lem}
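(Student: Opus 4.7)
The plan is to unwind the definitions; there is essentially nothing to prove beyond that. The key observation is the equivalence
\[
A \subseteq B[c,r] \iff d(c,a) \leq r \text{ for all } a \in A \iff f^A(c) \leq r.
\]
I would state this chain of equivalences first, as a one-line observation, since everything else follows from it.

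For the first equality, I would write
\[
\radius(A) = \inf\{r : \exists c \in M,\ A \subseteq B[c,r]\} = \inf\{r : \exists c \in M,\ f^A(c) \leq r\} = \inf_{c \in M} f^A(c),
\]
where the last step is the standard fact that $\inf\{r : \exists c,\ f^A(c) \leq r\}$ equals $\inf_c f^A(c)$ (each side is bounded by the other in the obvious way).

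For the second statement, I would argue that $c$ is a circumcenter of $A$ iff $A \subseteq B[c,\radius(A)]$, which by the key observation happens iff $f^A(c) \leq \radius(A)$; but $f^A(c) \geq \inf_{u \in M} f^A(u) = \radius(A)$ always, so this is equivalent to $f^A(c) = \radius(A) = \inf_{u \in M} f^A(u)$, i.e.\ $c$ minimizes $f^A$.

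There is no real obstacle here; the only tiny subtlety is to justify that the infimum defining $\radius(A)$ is attained by some $B[c,\radius(A)]$ so that the notion of circumcenter is well-defined in the first place. If the lemma is read as characterizing circumcenters whenever they exist (which is consistent with the definition given just above the lemma), then no existence claim is needed and the proof is the pure unwinding above. Otherwise I would note in passing that the characterization $f^A(c) = \radius(A)$ shows existence of circumcenters is the same problem as existence of minimizers of $f^A$, which is exactly what the subsequent sections (via Proposition \ref{min} and the $\lambda$-convexity results) will establish.
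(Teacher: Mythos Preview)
Your proof is correct and is exactly the definition-unwinding the paper has in mind; indeed, the paper omits the proof entirely, saying only that it is straightforward. Your remark on the existence subtlety is apt and consistent with how the paper proceeds.
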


Hence if $f^A$ has a unique minimizer then $A$ has a unique circumcenter, which we call the circumcenter of $A$. When the radius and the function $f^A$ are defined with certain metrics we include the subscript of the metric, for example, $f^A_p$ and $\radius_p$ are defined with the $d_p$ metric.

\begin{defn}
We define the metric $d_{\infty,\epsilon}$ as
$$d_{\infty,\epsilon}=(d_\infty^2+(\epsilon d_2)^2)^\frac{1}{2},$$
and we define the metric $d_{p,\epsilon}$ as
$$d_{p,\epsilon}=(d_p^p+(\epsilon d_2^\frac{2}{p})^p)^\frac{1}{p}=(d_p^p+\epsilon^p d_2^2)^\frac{1}{p}.$$
\end{defn}

The metric $d_{\infty,\epsilon}$ is an interior metric since it is the $l^2$ norm of interior metrics. Note that 
$$d_\infty\leq d_{\infty,\epsilon}\leq d_\infty+\epsilon d_2.$$
The bound $d_2\leq n^\frac{1}{2}\pi/2$ implies $d_{\infty,\epsilon}\leq d_\infty+\epsilon n^\frac{1}{2}\pi/2$.
Since $\epsilon d_2^\frac{2}{p}$ is a metric for $p\geq 2$ it follows that $d_{p,\epsilon}$ is a metric. It satisfies
$$d_p\leq d_{p,\epsilon}\leq d_p+\epsilon d_2^\frac{2}{p}.$$
The bound $d_2\leq n^\frac{1}{2}\pi/2$ implies $d_{p,\epsilon}\leq d_p+\epsilon (n^\frac{1}{2}\pi/2)^\frac{2}{p}$. Hence $d_{p,\epsilon}\to d_p$ as $\epsilon \to 0$. Note that $d_{p,\epsilon}$ does note tend to $d_{\infty,\epsilon}$ as $p\to\infty$.

\begin{thm}\label{thmcirc}
Given $p$ an even integer, a closed geodesic subspace $M$ of $(U,d_p)$ and a finite subset $A\subseteq M$, if $\radius_p(A)<\pi/2$ then $A$ has unique circumcenter in $M$. The $\pi/2$ bound is best optimal.
\end{thm}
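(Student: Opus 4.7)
The plan is to show that on a suitable compact geodesically convex sublevel set, the function $g(u)=\max_{a\in A}d_p(u,a)^p$ is continuous and strictly convex, and then invoke compactness for existence and strict convexity for uniqueness. Since $A$ is finite the supremum defining $f^A_p$ is a maximum, which makes both steps tractable; the fact that $g$ is a $p$-th power is essential because Theorem \ref{strict} gives strict convexity of $d_p(a,\cdot)^p$ but not of $d_p(a,\cdot)$ itself.

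In detail, I would pick $r$ with $\radius_p(A)<r<\pi/2$ and choose $c_0\in M$ with $A\subseteq B_p[c_0,r]$. Set
\[
S \;=\; M\cap\bigcap_{a\in A}B_p[a,r].
\]
Then $S$ is nonempty (contains $c_0$), closed in the compact space $U$, hence compact. For geodesic convexity, note that any $u,v\in S$ satisfy $d_p(u,v)<2r<\pi$ and hence $d_\infty(u,v)<\pi$, so the unique geodesic $\gamma_{u,v}$ of $U$ lies in $M$ by Definition \ref{defgeod} and in each $B_p[a,r]$ by Proposition \ref{geoconv} (since $r<\pi/2$ and $p$ is an even integer), so $\gamma_{u,v}\subseteq S$. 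Now for any non-constant geodesic $\beta\subseteq S$ and any $a\in A$, we have $\beta\subseteq B_p[a,r]\subseteq B_p(a,\pi/2)$, so Theorem \ref{strict} makes $t\mapsto d_p(a,\beta(t))^p$ strictly convex; the maximum of finitely many strictly convex functions is strictly convex, so $g$ is continuous and strictly convex along geodesics of $S$. Compactness yields a minimizer, and strict convexity forces it to be unique (two distinct minimizers would give a strictly smaller midpoint value on the connecting geodesic). Finally, any $u\in M\setminus S$ has $f_p^A(u)>r>\radius_p(A)$, so the unique minimizer of $g$ on $S$ is the unique minimizer of $f_p^A$ on all of $M$, i.e.\ the circumcenter.

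For optimality, I would work in $M=U_1\simeq S^1$ with $A=\{e^{i\pi/2},e^{-i\pi/2}\}$: both $1$ and $-1$ realize $f^A_p = \pi/2=\radius_p(A)$, producing two circumcenters when the strict inequality is relaxed. The only subtle point in the proof is verifying that $S$ is actually geodesically convex inside the subspace $M$ (not just inside $U$), and this is precisely where Lemma \ref{metricgeosub} and the bound $d_p(u,v)<\pi$ on $S$ do the work; the rest is routine once strict convexity of $g$ is in hand.
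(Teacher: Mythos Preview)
Your proof is correct and follows essentially the same route as the paper's: both restrict to the sublevel set $C=\bigcap_{a\in A}B_p[a,r]$, use Proposition~\ref{geoconv} for its geodesic convexity, invoke Theorem~\ref{strict} to get strict convexity of $u\mapsto\max_{a\in A}d_p(u,a)^p$, and use the same $S^1$ example for optimality (yours is the paper's example rotated by $\pi/2$). The only cosmetic difference is that you make the existence step explicit via compactness of $S\subseteq U$, whereas the paper simply asserts a unique minimizer from continuity and strict convexity on the closed set $C$; your formulation is arguably cleaner on this point.
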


\begin{proof}
We show that the function $f^A_p:M\to\R$ has a unique minimizer. 

By Lemma \ref{metricgeosub} we do not distinguish between the sets $M$ and $U$ since all the arguments take place in balls of radius less that $\pi/2$, where the metrics of $M$ and $U$ agree and have the same geodesics.

Take an $r$ such that $\radius_p(A)<r<\pi/2$ and define the set
$$C=\{c\in M: A\subseteq B[c,r]\}.$$
It is easy to verify that 
$$C=\bigcap_{a\in A}B[a,r]$$
so that  $C=(f^A_p)^{-1}((-\infty,r])$ for the function $f^A(u)=\sup_{a\in A}d_p(u,a)$. For each $a\in A$ the set $B_p[a,r]$ is geodesically convex by Proposition \ref{geoconv}, hence the intersection $C$ is geodesically convex. It is also closed since it is the intersection of closed balls. 

Since the function $(f^A_p)^p=\sup_{a\in A}d(u,a)^p$ is bounded from below and it is a supremum of a finite number of continuous and strictly convex functions it is continuous and strictly convex, therefore it has a unique minimizer in $C$, and hence also in $M$.  

We show that the $\pi/2$ bound cannot be improved. Take $M=U_1\simeq S^1$ and $A=\{1,-1\}$. Then $\radius_p(A)=\pi/2$ but $f_A$ has two minimizers $\{i,-i\}$.

\end{proof}

\begin{rem}
We plan to generalize the previous theorem to the infinite dimensional context of \cite{pschatten}.
\end{rem}

\begin{lem}\label{convie}
For $r<\pi/2$ and $u\in U$ the balls $B_{\infty,\epsilon}[u,r]$ are geodesically convex.
\end{lem}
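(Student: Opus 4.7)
The plan is to reduce to $u=\id$ by left translation (which is isometric for $d_\infty$ and for $d_2$, hence for $d_{\infty,\epsilon}$), pick two points $v_1,v_2\in B_{\infty,\epsilon}[\id,r]$, and verify that the standard $U$-geodesic $\gamma=\gamma_{v_1,v_2}$ remains inside the ball. Since $d_\infty\leq d_{\infty,\epsilon}$, the points $v_1,v_2$ also lie in $B_\infty[\id,r]$, and because $r<\pi/2$ this geodesic is uniquely defined; Proposition \ref{geoconvunif} then guarantees $\gamma\subseteq B_\infty[\id,r]$.

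The central step is to prove that
$$g(t)=d_{\infty,\epsilon}(\id,\gamma(t))^2=d_\infty(\id,\gamma(t))^2+\epsilon^2 d_2(\id,\gamma(t))^2$$
is convex on $[0,1]$. Once this is in hand, convexity together with the endpoint bounds $g(0),g(1)\leq r^2$ forces $g(t)\leq r^2$ for every $t\in[0,1]$, which is exactly the assertion $\gamma\subseteq B_{\infty,\epsilon}[\id,r]$.

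Each summand can be handled by a result already proved in the paper. For the first summand, since $\gamma\subseteq B_\infty(\id,\pi/2)$ by the previous step, Theorem \ref{conv} yields convexity of $t\mapsto d_\infty(\id,\gamma(t))$; squaring a non-negative convex function preserves convexity (the map $s\mapsto s^2$ is convex and non-decreasing on $[0,\infty)$). For the second summand, Theorem \ref{strong} directly gives strong (hence ordinary) convexity of $t\mapsto d_2(\id,\gamma(t))^2$ along any non-constant geodesic contained in $B_\infty[\id,r]$ with $r<\pi/2$. Summing yields convexity of $g$.

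I do not anticipate a real obstacle; the proof is a clean combination of the convexity results from Section \ref{sprop} with the definition of $d_{\infty,\epsilon}$. The only detail worth checking is that $\gamma$ lies in the \emph{open} ball $B_\infty(\id,\pi/2)$ needed to invoke Theorem \ref{conv}, which is guaranteed by the strict inequality $r<\pi/2$ combined with the inclusion $\gamma\subseteq B_\infty[\id,r]$ obtained in the first step.
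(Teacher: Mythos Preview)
Your proof is correct and follows essentially the same route as the paper: reduce to $u=\id$, use $d_\infty\leq d_{\infty,\epsilon}$ together with Proposition~\ref{geoconvunif} to trap $\gamma_{v_1,v_2}$ inside $B_\infty[\id,r]$, and then deduce convexity of $t\mapsto d_{\infty,\epsilon}(\id,\gamma(t))^2$ from Theorem~\ref{conv} and Theorem~\ref{strong}. Your write-up is in fact slightly more careful than the paper's, since you make explicit both the step that squaring a non-negative convex function preserves convexity and the verification that $\gamma\subseteq B_\infty[\id,r]\subseteq B_\infty(\id,\pi/2)$ so that the hypothesis of Theorem~\ref{conv} is met.
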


\begin{proof}
We assume that $u=\id$. Since $d_\infty\leq d_{\infty,\epsilon}$ we have
$$B_{\infty,\epsilon}\subseteq B_{\infty}[\id,r].$$
Therefore, if $u,v\in B_{\infty,\epsilon}[\id,r]$, then $u,v\in B_{\infty}[\id,r]$ and by Proposition \ref{geoconvunif} there is a unique geodesic $\gamma_{u,v}$ joining $u$ and $v$ which is contained in $B_{\infty}[\id,r]$. By Theorem \ref{conv} the function $d_p(\gamma_{u,v}(t),\id)^p$ is convex, and by Theorem \ref{strong} the function $d_2(\gamma_{u,v}(t),\id)^2$ is strongly convex. Hence the function
$$d_{\infty,\epsilon}(\gamma_{u,v}(t),\id)^2=d_\infty(\gamma_{u,v}(t),\id)^2+\epsilon^2 d_2(\gamma_{u,v}(t),\id)^2$$
is convex and we conclude that $\gamma_{u,v}\subseteq B_{p,\epsilon}[\id,r]$.   

\end{proof}

\begin{thm}\label{thmcirc3}
Given $p$ an even integer, an $\epsilon>0$, a closed geodesic subspace $M$ of $U$ and a subset $A\subseteq M$. If $\radius_{\infty,\epsilon}(A)<\pi/2$ then the set $A$ has a unique circumcenter in $(M,d_{\infty,\epsilon})$.
\end{thm}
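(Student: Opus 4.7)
The plan is to follow the scheme of Theorem \ref{thmcirc}, but to replace the role of Theorem \ref{strict} (which needed $A$ finite) with Lemma \ref{convie} for geodesic convexity of the sublevel set together with Theorem \ref{strong} for the strong convexity of $d_2^2$, so that a uniform-in-$a$ strongly convex lower bound survives even when $A$ is infinite. First I would fix $r$ with $\radius_{\infty,\epsilon}(A)<r<\pi/2$ and introduce
$$C=\bigcap_{a\in A}B_{\infty,\epsilon}[a,r]=(f^A_{\infty,\epsilon})^{-1}\bigl((-\infty,r]\bigr).$$
By the definition of circumradius $C$ is nonempty; it is closed as an intersection of closed balls, and geodesically convex by Lemma \ref{convie} (transferred to $M$ via Lemma \ref{metricgeosub}). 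Since $U$ is compact and $M\subseteq U$ is closed, $M$ is compact; the $1$-Lipschitz function $f^A_{\infty,\epsilon}$ attains its infimum $\radius_{\infty,\epsilon}(A)$ on $M$, and any minimizer lies in $C$, which settles existence.

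For uniqueness I would suppose $u_1,u_2\in C$ are distinct minimizers. Since $d_\infty\leq d_{\infty,\epsilon}$, both lie in $B_\infty[a,r]$ for every $a\in A$, so $d_\infty(u_1,u_2)<2r<\pi$, and by Proposition \ref{geoconvunif} the curve $\gamma_{u_1,u_2}(t)=u_1\exp(t\log(u_1^{-1}u_2))$ is the unique geodesic joining them; geodesic convexity of $C$ forces $\gamma_{u_1,u_2}\subseteq C\subseteq B_\infty[a,r]$ for every $a\in A$. For each such $a$ I would then show that
$$d_{\infty,\epsilon}(\gamma_{u_1,u_2}(t),a)^2=d_\infty(\gamma_{u_1,u_2}(t),a)^2+\epsilon^2\,d_2(\gamma_{u_1,u_2}(t),a)^2$$
is $\mu$-convex for a single $\mu>0$ not depending on $a$: the first summand is convex because $d_\infty(\cdot,a)$ is convex by Theorem \ref{conv} and $t\mapsto t^2$ is nondecreasing and convex on $[0,\infty)$; the second summand is $\epsilon^2\lambda$-convex with $\lambda=\|x\|_2^2\,\sin(2r)/(2r)>0$ by Theorem \ref{strong}, where $x=\log(u_1^{-1}u_2)$. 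Taking the supremum over $a$ preserves $\mu$-convexity, so $f^A_{\infty,\epsilon}(\gamma_{u_1,u_2}(t))^2$ is strongly convex along $\gamma_{u_1,u_2}$, contradicting $f^A_{\infty,\epsilon}(u_1)=f^A_{\infty,\epsilon}(u_2)=\radius_{\infty,\epsilon}(A)$.

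The step I expect to be the main obstacle is precisely extracting that uniform constant $\mu$: Theorem \ref{strong} yields a $\lambda$ depending on the reference point $a$ only through the radius of a $d_\infty$-ball around $a$ containing the geodesic, and the bound $d_\infty\leq d_{\infty,\epsilon}$ supplies the same $r<\pi/2$ for every $a\in A$. This is where using $d_2$ (with its inner-product structure providing strong convexity) in the perturbation, rather than another $d_p$-type term, is essential; it is also why the analogous uniqueness cannot be obtained directly from Theorem \ref{strict} alone.
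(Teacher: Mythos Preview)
Your proposal is correct and follows essentially the same route as the paper: define $C=\bigcap_{a\in A}B_{\infty,\epsilon}[a,r]$, use Lemma \ref{convie} for its geodesic convexity, and combine Theorem \ref{conv} (convexity of $d_\infty$) with Theorem \ref{strong} (strong convexity of $d_2^2$) to get a uniform $\epsilon^2\lambda$-convexity of $(f^A_{\infty,\epsilon})^2$ along geodesics in $C$. The only packaging difference is that the paper applies Proposition \ref{min} on $(C,d_2)$ to obtain existence and uniqueness simultaneously, whereas you shortcut existence via compactness of $M\subseteq U$ and derive uniqueness by a direct midpoint contradiction along the geodesic joining two putative minimizers.
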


\begin{proof}
By Lemma \ref{metricgeosub} we do not distinguish between the sets $M$ and $U$ since all the arguments take place in balls of radius less that $\pi/2$, where the metrics of $M$ and $U$ agree and have the same geodesics.

Take an $r$ such that $\radius_{\infty,\epsilon}(A)<r<\pi/2$ and define the set
$$C=\{c\in M: A\subseteq B_{\infty,\epsilon}[c,r]\}.$$
It is easy to verify that 
$$C=\bigcap_{a\in A}B_{\infty,\epsilon}[a,r]$$
so that $C=(f^A_{\infty,\epsilon})^{-1}((-\infty,r])$ for the function $f^A_{\infty,\epsilon}(u)=\sup_{a\in A}d_{\infty,\epsilon}(u,a)$. For each $a\in A$ the set $B_{\infty,\epsilon}[a,r]$ is geodesically convex by Lemma \ref{convie} hence the intersection $C$ is geodesically convex. It is also closed since it is the intersection of closed balls. 

Note that
$$f^A_{\infty,\epsilon}(u)^2=\sup_{a\in A}d_{\infty,\epsilon}(u,a)^2=\sup_{a\in A}(d_\infty(u,a)^2+\epsilon^2 d_2(u,a)^2).$$

Let $\beta:[0,l]\to C$ be a unit speed geodesic in the $d_2$ metric given by $\beta(t)=ue^{tx}$. Therefore $f(t)=d_2(\beta(t),a)^2$ satisfies 
$$f''\geq \frac{\sin(2r)}{2r}=\lambda>0$$ 
for all $a\in A$. Since $g(t)=d_\infty(\beta(t),a)^2$ is convex, then 
$$d_\infty(\cdot,a)^2+\epsilon^2 d_2(\cdot,a)^2$$ 
is $\epsilon^2\lambda$-convex in $(C,d_2)$. Whence $(f^A_{\infty,\epsilon})^2$ is $\epsilon^2\lambda$-convex in $(C,d_2)$ since it is the supremum of $\epsilon^2\lambda$-convex functions.     

Since the function $(f^A_{\infty,\epsilon})^2$ is bounded from below and it is $\epsilon^2\lambda$-convex when it is composed with any unit speed geodesic of $(C,d_2)$ we conclude from Proposition \ref{min} that it has a unique minimizer in $C$, and hence also in $M$. This unique minimizer is the the circumcenter of $A$ in the $d_{\infty,\epsilon}$ metric. 

\end{proof}

\begin{lem}\label{convpe}
For $r<\pi/2$ and $u\in U$ the balls $B_{p,\epsilon}[u,r]$ are geodesically convex.
\end{lem}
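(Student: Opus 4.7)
The plan is to mimic the proof of Lemma \ref{convie}, with the role of $d_\infty$ taken over by $d_p$. By left-invariance I reduce to $u=\id$. Since $d_p\leq d_{p,\epsilon}$, there is the immediate inclusion $B_{p,\epsilon}[\id,r]\subseteq B_p[\id,r]$. Given $v,w\in B_{p,\epsilon}[\id,r]$, Proposition \ref{geoconv} applied to the $d_p$-ball gives that the unique geodesic $\gamma_{v,w}$ joining $v$ and $w$ in $U$ stays inside $B_p[\id,r]$, and therefore inside $B_\infty[\id,r]$ since $d_\infty\leq d_p$. Because $r<\pi/2$, this places $\gamma_{v,w}$ inside the domain where both Theorem \ref{strict} and Theorem \ref{strong} apply.

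Then I would study the real-valued function
$$h(t)=d_{p,\epsilon}(\gamma_{v,w}(t),\id)^p=d_p(\gamma_{v,w}(t),\id)^p+\epsilon^p\,d_2(\gamma_{v,w}(t),\id)^2.$$
Theorem \ref{strict} yields the (strict) convexity of the first summand along $\gamma_{v,w}$, and Theorem \ref{strong} yields the strong convexity of $t\mapsto d_2(\gamma_{v,w}(t),\id)^2$ along $\gamma_{v,w}$ (which is a geodesic for $d_2$ as well, since one-parameter subgroups minimize in all the $d_p$ metrics). Sums of convex functions are convex, so $h$ is convex on $[0,1]$. Since $h(0)\leq r^p$ and $h(1)\leq r^p$ by hypothesis, convexity forces $h(t)\leq r^p$ for every $t\in[0,1]$, i.e. $\gamma_{v,w}(t)\in B_{p,\epsilon}[\id,r]$.

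The only subtle point, and the step I expect to require the most care, is making sure every hypothesis of Theorems \ref{strict} and \ref{strong} is simultaneously in force: the geodesic $\gamma_{v,w}$ must exist and be unique (which needs $d_\infty(v,w)<\pi$), it must lie in $B_p(\id,\pi/2)$ to apply Theorem \ref{strict}, and it must lie in some $B_\infty[\id,r']$ with $r'<\pi/2$ to apply Theorem \ref{strong}. All three requirements are delivered in one stroke by the containment $\gamma_{v,w}\subseteq B_p[\id,r]\subseteq B_\infty[\id,r]$, with $r<\pi/2$, coming from Proposition \ref{geoconv}. The rest of the argument is a routine convex-combination bookkeeping, strictly parallel to the proof of Lemma \ref{convie}.
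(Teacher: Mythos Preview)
Your proposal is correct and follows essentially the same argument as the paper: reduce to $u=\id$, use the chain of inclusions $B_{p,\epsilon}[\id,r]\subseteq B_p[\id,r]\subseteq B_\infty[\id,r]$ to place $\gamma_{v,w}$ in the domain of Theorems \ref{strict} and \ref{strong}, and then conclude by convexity of $t\mapsto d_p(\gamma_{v,w}(t),\id)^p+\epsilon^p d_2(\gamma_{v,w}(t),\id)^2$. The only cosmetic difference is that you invoke Proposition \ref{geoconv} (geodesic convexity of $B_p[\id,r]$) to contain the geodesic, whereas the paper cites Proposition \ref{geoconvunif}; your citation is in fact the more apt one for obtaining $\gamma_{v,w}\subseteq B_p[\id,r]$.
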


\begin{proof}
We assume that $u=\id$. Since $d_\infty\leq d_p\leq d_{p,\epsilon}$ we have
$$B_{p,\epsilon}[\id,r]\subseteq B_{p}[\id,r]\subseteq B_{\infty}[\id,r].$$
Therefore, if $u,v\in B_{p,\epsilon}[\id,r]$, then $u,v\in B_{\infty}[\id,r]$ and by Proposition \ref{geoconvunif} there is a unique geodesic $\gamma_{u,v}$ joining $u$ and $v$ which is contained in $B_{p}[\id,r]\subseteq B_{\infty}[\id,r]$. By Theorem \ref{strict} the function $d_p(\gamma_{u,v}(t),\id)^p$ is strictly convex, and by Theorem \ref{strong} the function $d_2(\gamma_{u,v}(t),\id)^2$ is strongly convex. Hence the function
$$d_{p,\epsilon}(\gamma_{u,v}(t),\id)^p=d_p(\gamma_{u,v}(t),\id)^p+\epsilon^p d_2(\gamma_{u,v}(t),\id)^2$$
is convex and we conclude that $\gamma_{u,v}\subseteq B_{p,\epsilon}[\id,r]$.   

\end{proof}

\begin{thm}\label{thmcirc2}
Given $p$ an even integer, an $\epsilon>0$, a closed geodesic subspace $M$ of $U$ and a subset $A\subseteq M$. If $\radius_{p,\epsilon}(A)<\pi/2$ then the set $A$ has a unique circumcenter in $(M,d_{p,\epsilon})$.
\end{thm}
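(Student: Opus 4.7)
The plan is to follow the blueprint of the proof of Theorem \ref{thmcirc3}, replacing $d_{\infty,\epsilon}$ by $d_{p,\epsilon}$ and using the $p$-th power of $f^A_{p,\epsilon}$ in place of the square. Accordingly, Theorem \ref{conv} is replaced by Theorem \ref{strict} to handle the $d_p^p$ summand, while Theorem \ref{strong} still injects strong convexity through the $d_2$ perturbation; Lemma \ref{convpe} takes over the role played by Lemma \ref{convie}.

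First I would use Lemma \ref{metricgeosub} together with the comparison $d_p \leq d_{p,\epsilon}$ to reduce the problem to $U$: any closed $d_{p,\epsilon}$-ball of radius less than $\pi/2$ sits inside the corresponding $d_p$-ball, so every relevant geodesic lives in a region where $M$ and $U$ carry the same metric and geodesics. Then I would pick $r$ with $\radius_{p,\epsilon}(A) < r < \pi/2$ and set
$$C = \bigcap_{a \in A} B_{p,\epsilon}[a, r] = (f^A_{p,\epsilon})^{-1}\bigl((-\infty, r]\bigr).$$
Lemma \ref{convpe} makes each $B_{p,\epsilon}[a, r]$ geodesically convex, so $C$ is geodesically convex, and it is closed (and hence complete) as an intersection of closed balls.

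The heart of the argument is to show that $(f^A_{p,\epsilon})^p$ is strongly convex along unit-speed $d_2$-geodesics in $C$. Writing
$$(f^A_{p,\epsilon})^p(u) = \sup_{a \in A}\bigl(d_p(u, a)^p + \epsilon^p d_2(u, a)^2\bigr),$$
I would fix $a \in A$ and a unit-speed $d_2$-geodesic $\beta(t) = v e^{tx}$ in $C$; the chain $d_\infty \leq d_p \leq d_{p,\epsilon}$ places $\beta$ inside both $B_p[a, r]$ and $B_\infty[a, r]$ with $r < \pi/2$, so Theorem \ref{strict} gives convexity of $t \mapsto d_p(\beta(t), a)^p$ and Theorem \ref{strong} gives $\tfrac{d^2}{dt^2}\, d_2(\beta(t), a)^2 \geq \sin(2r)/(2r) = \lambda > 0$. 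Summing yields second derivative at least $\epsilon^p \lambda$, so each summand is strongly convex with a constant independent of $a$, and the supremum inherits this strong convexity.

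To conclude I would apply Proposition \ref{min} to the continuous, bounded-below, strongly convex function $(f^A_{p,\epsilon})^p$ on the complete strictly intrinsic space $(C, d_2)$, producing a unique minimizer in $C$ which is automatically the global minimizer in $M$ because $f^A_{p,\epsilon} > r$ off $C$. The main obstacle is purely bookkeeping: verifying that every intermediate geodesic stays inside the ball where the hypotheses of Theorems \ref{strict} and \ref{strong} hold. This drops out cleanly from the comparisons $d_\infty \leq d_p \leq d_{p,\epsilon}$ together with $r < \pi/2$, so no genuinely new difficulty appears beyond the $d_{\infty,\epsilon}$ case.
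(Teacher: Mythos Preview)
Your proposal is correct and follows the paper's argument almost verbatim: the same set $C$, the same use of Lemma \ref{convpe}, the same decomposition $(f^A_{p,\epsilon})^p = \sup_a(d_p^p + \epsilon^p d_2^2)$ with Theorem \ref{strict} for the first summand and Theorem \ref{strong} for the second, and the same appeal to Proposition \ref{min}. The only cosmetic difference is that you parametrize the geodesics by $d_2$-arclength whereas the paper uses $d_p$-arclength and then invokes $\|x\|_2 \geq \|x\|_p = 1$ to get the same lower bound $\lambda = \sin(2r)/(2r)$; either choice works.
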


\begin{proof}
By Lemma \ref{metricgeosub} we do not distinguish between the sets $M$ and $U$ since all the arguments take place in balls of radius less that $\pi/2$, where the metrics of $M$ and $U$ agree and have the same geodesics.

Take an $r$ such that $\radius_{p,\epsilon}(A)<r<\pi/2$ and define the set
$$C=\{c\in M: A\subseteq B_{p,\epsilon}[c,r]\}.$$
It is easy to verify that 
$$C=\bigcap_{a\in A}B_{p,\epsilon}[a,r]$$
so that $C=(f^A_{p,\epsilon})^{-1}((-\infty,r])$ for the function $f^A_{p,\epsilon}(u)=\sup_{a\in A}d_{p,\epsilon}(u,a)$. For each $a\in A$ the set $B_{p,\epsilon}[a,r]$ is geodesically convex by Lemma \ref{convpe} hence the intersection $C$ is geodesically convex. It is also closed since it is the intersection of closed balls. 

Note that
$$f^A_{p,\epsilon}(u)^p=\sup_{a\in A}d_{p,\epsilon}(u,a)^p=\sup_{a\in A}(d_p(u,a)^p+\epsilon^p d_2(u,a)^2).$$

Let $\beta:[0,l]\to C$ be a unit speed geodesic given by $\beta(t)=ue^{tx}$. The unit speed condition means that $\|x\|_p=1$ and this geodesic has speed $\|x\|_2$ in the $d_2$ metric. Note that $1=\|x\|_p\leq\|x\|_2$. Therefore $f(t)=d_2(\beta(t),a)^2$ satisfies 
$$f''\geq \frac{\sin(2r)}{2r}=\lambda>0$$ 
for all $a\in A$. Since $g(t)=d_p(\beta(t),a)^p$ satisfies $g''\geq 0$ then 
$$d_p(\cdot,a)^p+\epsilon^p d_2(\cdot,a)^2$$ 
is $\epsilon^p\lambda$-convex in $(C,d_p)$. Whence $(f^A_{p,\epsilon})^p$ is $\epsilon^p\lambda$-convex in $(C,d_p)$ since it is the supremum of $\epsilon^p\lambda$-convex functions.     

Since the function $(f^A_{p,\epsilon})^p$ is bounded from below and it is $\epsilon^p\lambda$-convex when it is composed with any unit speed geodesic of $(C,d_p)$ we conclude from Proposition \ref{min} that it has a unique minimizer in $C$, and hence also in $M$. This unique minimizer is the the circumcenter of $A$ in the $d_{p,\epsilon}$ metric. 

\end{proof}

We can apply this result to the following fixed point property.

\begin{thm}\label{fixedpoint}
Let $p$ be an even integer and let a group $G$ act isometrically on a closed geodesic subspace $M$ of $U$. If there is an $m\in M$ such that the radius of the orbit of $m$ with the $d_\infty$ distance is strictly less than $\pi/2$ then  the action has a fixed point. 
\end{thm}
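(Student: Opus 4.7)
The plan is to apply Theorem \ref{thmcirc3} to the orbit $A = G\cdot m$ and use uniqueness of the circumcenter to produce a $G$-fixed point. The only delicate step will be passing from the $d_\infty$ radius bound in the hypothesis to the corresponding bound in the perturbed metric $d_{\infty,\epsilon}$, but this is straightforward because the ambient $d_2$-diameter of $U$ is finite, so the $\epsilon$-correction can be absorbed into the slack $\pi/2 - \radius_\infty(A) > 0$.

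First I would verify the radius hypothesis for $d_{\infty,\epsilon}$. By assumption there exist $r$ with $\radius_\infty(A) < r < \pi/2$ and $c_0\in M$ such that $A\subseteq B_\infty[c_0,r]$. Since $d_2$ is bounded on $U$ by $n^{1/2}\pi/2$, for every $a\in A$,
$$d_{\infty,\epsilon}(c_0,a)^2 = d_\infty(c_0,a)^2 + \epsilon^2 d_2(c_0,a)^2 \leq r^2 + \epsilon^2(n^{1/2}\pi/2)^2.$$
Choosing $\epsilon > 0$ small enough that the right-hand side remains strictly less than $(\pi/2)^2$ yields $\radius_{\infty,\epsilon}(A)<\pi/2$, and Theorem \ref{thmcirc3} then produces a unique circumcenter $c\in M$ of $A$ in the metric $d_{\infty,\epsilon}$.

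Finally I would argue that $c$ is $G$-fixed. Since $G$ acts isometrically on $M$, the action preserves the underlying bi-invariant norms and hence preserves both $d_\infty$ and $d_2$, and so also $d_{\infty,\epsilon}$. The orbit $A = G\cdot m$ is $G$-invariant, so applying an isometry $g\in G$ to the inclusion $A\subseteq B_{\infty,\epsilon}[c,\radius_{\infty,\epsilon}(A)]$ yields $A = g\cdot A \subseteq B_{\infty,\epsilon}[g\cdot c,\radius_{\infty,\epsilon}(A)]$. Hence $g\cdot c$ is also a circumcenter of $A$, and uniqueness forces $g\cdot c = c$ for every $g\in G$.
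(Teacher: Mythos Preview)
Your proof is correct and follows essentially the same route as the paper: pass from the $d_\infty$ radius bound to a $d_{\infty,\epsilon}$ bound using the finite $d_2$-diameter of $U$, invoke Theorem~\ref{thmcirc3} to obtain a unique circumcenter, and conclude it is fixed because the $G$-action, being isometric for $d_{\infty,\epsilon}$, permutes circumcenters of the invariant orbit. The only cosmetic difference is that the paper phrases the last step via $G$-invariance of the function $f^{\oo(m)}_{\infty,\epsilon}$ rather than via ball inclusions, but this is the same argument.
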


\begin{proof}
We denote by $\oo(m)$ the orbit of $m$.  
Since $\radius_\infty(\oo(m))<\pi/2$ we can choose an $\epsilon>0$ small enough such that $\radius_{\infty,\epsilon}(\oo(m))<\pi/2$. We consider the function $f^{\oo(m)}_{\infty,\epsilon}:M\to \R$. The action is isometric for the metric $d_{\infty,\epsilon}$ and the orbit is invariant, hence
$$f^{\oo(m)}_{\infty,\epsilon}(m')=f^{g\cdot \oo(m)}_{\infty,\epsilon}(g\cdot m')=f^{\oo(m)}_{\infty,\epsilon}(g\cdot m')$$ 
for all $g\in G$ and $m'\in M$. By Theorem \ref{thmcirc3} the function $f^{\oo(m)}_{\infty,\epsilon}$ has a unique minimizer $c$ and since $f^{\oo(m)}_{\infty,\epsilon}(c)=f^{\oo(m)}_{\infty,\epsilon}(g\cdot c)$ for all $g\in G$ we conclude that $g\cdot c=c$ for all $g\in G$.  
\end{proof}

\section{Optimal bounds for rigidity}\label{srigid}

Rigidity problems ask under what conditions on two group homomorphisms $\phi,\rho:H\to G$ there is a $g\in G$ such that $\phi(h)=g\rho(h)g^{-1}$. In Proposition 4.4 1) and Lemma 2.6 of \cite{ulam} local rigidity results are obtained when $G$ is a group of unitaries. Local rigidity results assert that if $\phi$ and $\rho$ are close in some sense, so that $\{\phi(h)\rho(h)^{-1}:h\in H\}$ is small set, then a $g$ giving an equivalence between the two representations exists. 

If we take as $H$ the two element group, $G=U_1\simeq S^1$ equal to the unitary group on one dimensional space, $\phi$ the trivial representation and $\rho$ the non trivial representation, then $\{\phi(h)\rho(h)^{-1}:h\in H\}=\{-1,1\}\subseteq S^1$. Note that $\radius_\infty(\{-1,1\})=\pi/2$. We show that this radius is the smallest radius that the set $\{\phi(h)u\rho(h)^{-1}:h\in H\}$ can have in the $d_\infty$ metric for two non equivalent representations and for all $g\in G$.

\begin{rem}
We note that in Theorem 3.7 of \cite{karcher3} rigidity results are obtained using the center of mass of pushforward of Haar probability measures. In this article admissible norms are used to define bi-invariant Finsler metrics. In this context the scaled uniform norm $2\|\cdot\|_\infty$ is admissible, that is, it satisfies 
$$2\|[x,y]\|_\infty\leq 2\|x\|_\infty 2\|y\|_\infty$$ 
for all $x,y$ in the Lie algebra of the group. So the upper bounds in terms of this norm must be divided by $2$ to get bounds in terms of the not scaled norm. The bound obtained in \cite{karcher3} in terms of the $d_\infty$ distance for the equivalence of homomorphisms is $\pi/4$.
\end{rem}

\begin{thm}\label{rigrep}
Let $\phi,\rho:H\to G$ be two homomorphisms into a closed geodesic subgroup $G \subseteq U$. If there is an $u\in G$ such that $\radius_\infty(\{\phi(h)u\rho(h)^{-1}:h\in H\})<\pi/2$ then there is $g\in G$ such that 
$$\phi(h)=g\rho(h)g^{-1}$$
for all $h\in H$.
\end{thm}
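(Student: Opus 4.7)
The plan is to recognize this as a direct application of the fixed point theorem \ref{fixedpoint}. Let $H$ act on $G$ by the twisted conjugation
\[
h \cdot v = \phi(h)\, v\, \rho(h)^{-1}.
\]
Since $\phi$ and $\rho$ are homomorphisms, a short computation gives $(h_1 h_2)\cdot v = \phi(h_1)\phi(h_2)\, v\, \rho(h_2)^{-1}\rho(h_1)^{-1} = h_1 \cdot (h_2 \cdot v)$, so this is a left action; and because $\phi(h), \rho(h) \in G$ and $G$ is a subgroup, the action preserves $G$.

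The distance $d_\infty$ is bi-invariant on $U$ (it arises from the conjugation-invariant operator norm on $\u$ by left/right translation), so left multiplication by $\phi(h)$ and right multiplication by $\rho(h)^{-1}$ are both $d_\infty$-isometries. Hence the action of $H$ on $G$ is isometric with respect to $d_\infty$. The orbit of the given element $u$ is exactly
\[
\oo(u) = \{\phi(h)\, u\, \rho(h)^{-1} : h \in H\},
\]
whose $d_\infty$-radius is strictly less than $\pi/2$ by hypothesis.

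Since $G$ is a closed geodesic subgroup of $U$, Theorem \ref{fixedpoint} (applied with $M=G$ and any even integer $p$) provides a fixed point $g \in G$ of the action. The fixed point equation $\phi(h)\, g\, \rho(h)^{-1} = g$ for every $h \in H$ rearranges to $\phi(h) = g\, \rho(h)\, g^{-1}$, which is the desired equivalence. There is no real obstacle: the entire content of the argument was already packaged into Theorem \ref{fixedpoint}, and the only thing to check is that twisted conjugation is an isometric action, which follows at once from bi-invariance of $d_\infty$.
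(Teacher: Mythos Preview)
Your proof is correct and follows essentially the same route as the paper: define the twisted conjugation action of $H$ on $G$, observe that it is isometric by bi-invariance of the Finsler metrics, note that the orbit of $u$ is exactly the set in the hypothesis, and then invoke Theorem~\ref{fixedpoint} to produce a fixed point $g$, which gives the conjugacy. The only small difference is that the paper explicitly records isometry with respect to $d_{\infty,\epsilon}$ (i.e.\ for both $d_\infty$ and $d_2$), since that is what the proof of Theorem~\ref{fixedpoint} actually uses; your bi-invariance argument applies verbatim to $d_2$ as well, so this is not a gap.
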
 

\begin{proof}
The action of $H$ on $G$ given by
$$h\mapsto (g\mapsto \phi(h)g\rho(h)^{-1})$$
is isometric in the $d_{\infty,\epsilon}$ metric by bi-invariance of the metric $d_\infty$ and $d_2$. 
We have $\oo(u)=\{\phi(h)u\rho(h)^{-1}:h\in H\}$ and $\radius_\infty(\oo(u))<\pi/2$. By Theorem \ref{fixedpoint} the action has a fixed point $c$, that is, $\phi(h)c\rho(h)^{-1}=c$ for all $h\in H$. Hence $\phi(h)=c\rho(h) c^{-1}$ for all $h\in H$. 
\end{proof}

\begin{rem}
Using the bi-invariance of the distance $d_\infty$ it is easy to verify that the condition $\radius_\infty(\{\phi(h)u\rho(h)^{-1}:h\in H\})<\pi/2$ is equivalent to the condition
$$\inf_{u,v\in G}\sup_{h\in H}d_\infty(\rho(h),v\pi(h) u)<\pi/2.$$
This is implied by the condition $\sup_{h\in H}d_\infty(\rho(h),\pi(h))<\pi/2$.

\end{rem}

For actions on Grassmannians we can obtain a similar result. Consider for integers $0<m<n$ a Grassmannian $\Gr_{m,n}\hookrightarrow U$ embedded in the unitary group as in Example \ref{exgrass}.

\begin{thm}
Let $H\subseteq U$ be a subgroup which acts in the canonical way on $Gr_{m,n}$, that is
$$h\mapsto (e_P\mapsto he_Ph^{-1}).$$
If there is a projection $P$ onto an $m$-dimensional subspace such that 
$$\radius_\infty(\{he_Ph^{-1}:h\in H\})<\pi/2$$ 
then there is a projection $Q$ onto an $m$-dimensional subspace such that $hQ=Qh$ for all $h\in H$. The $\pi/2$ bound is optimal.
\end{thm}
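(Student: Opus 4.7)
The plan is to apply Theorem~\ref{fixedpoint} directly to the canonical $H$-action on the Grassmannian. By Example~\ref{exgrass}, the Grassmannian $\Gr_{m,n}$, embedded via $P\mapsto e_P=2P-\id$, is a closed geodesic subspace of $U$ (closedness follows from compactness of $\Gr_{m,n}$). Conjugation by any $h\in H\subseteq U$ preserves spectra, hence preserves the set of symmetries with trace $2m-n$, i.e.\ $\Gr_{m,n}$ itself; bi-invariance of the operator norm makes the action $d_\infty$-isometric. By hypothesis $\radius_\infty(\oo(e_P))<\pi/2$, so Theorem~\ref{fixedpoint} (applied with $M=\Gr_{m,n}$ and base point $e_P$) supplies a fixed point $e_Q\in\Gr_{m,n}$, meaning $he_Qh^{-1}=e_Q$ for every $h\in H$. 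Writing $e_Q=2Q-\id$, this is equivalent to $hQ=Qh$ for all $h\in H$, and $Q$ is by construction a projection onto an $m$-dimensional subspace. The one point to observe is that Theorem~\ref{fixedpoint} delivers a minimizer inside $M=\Gr_{m,n}$ rather than merely inside $U$; this is precisely what the ``closed geodesic subspace'' condition, together with Lemma~\ref{metricgeosub}, guarantees.

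For the optimality of the $\pi/2$ bound, I would exhibit the standard irreducible example: take $n=2$, $m=1$, and $H=SU_2\subseteq U_2$. Since $SU_2$ acts irreducibly on $\C^2$, no $1$-dimensional subspace is $H$-invariant. For any rank-one projection $P$, transitivity of $SU_2$ on lines gives $\oo(e_P)=\Gr_{1,2}$. Taking $c=i\id\in U_2$, the matrix $i\,e_{P'}$ has spectrum $\{\pm i\}$ for every $P'\in\Gr_{1,2}$, so $d_\infty(c,e_{P'})=\pi/2$ and therefore $\radius_\infty(\oo(e_P))\le\pi/2$. Moreover, for two orthogonal rank-one projections $P_1,P_2$ in $\C^2$ one has $P_1+P_2=\id$ and hence $e_{P_2}=-e_{P_1}$, which forces $d_\infty(e_{P_1},e_{P_2})=\pi$; the triangle inequality then prevents any ball of radius strictly less than $\pi/2$ from containing both points. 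Thus $\radius_\infty(\oo(e_P))=\pi/2$ exactly, and the strict inequality in the hypothesis cannot be weakened to $\le\pi/2$.

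No serious obstacle is anticipated: the argument is a direct translation of the fixed point theorem to the Grassmannian, the only analytic content being the spectral computations above and the verification that $\Gr_{m,n}$ fits the ``closed geodesic subspace'' framework of Section~\ref{scirc}.
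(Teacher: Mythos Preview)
Your existence argument is correct and essentially identical to the paper's: both apply Theorem~\ref{fixedpoint} to $M=\Gr_{m,n}$ and unravel the fixed-point condition $he_Qh^{-1}=e_Q$.

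Your optimality argument, however, has a gap. Recall from the paper's Definition of circumradius that $\radius(A)=\inf\{r:\exists\,c\in M\text{ with }A\subseteq B[c,r]\}$, and Theorem~\ref{fixedpoint} is applied with $M=\Gr_{m,n}$; thus the circumcenter must be sought \emph{inside} $\Gr_{m,n}$. Your upper-bound witness $c=i\id$ lies in $U_2$ but not in $\Gr_{1,2}$, so you have only shown that the radius of $\oo(e_P)=\Gr_{1,2}$ \emph{in $U_2$} equals $\pi/2$. Measured in $\Gr_{1,2}$ the radius of your orbit is actually $\pi$: by transitivity $\sup_{P'}d_\infty(e_P,e_{P'})$ is independent of $P$, and taking $P'$ orthogonal to $P$ gives $d_\infty(e_P,e_{P'})=\pi$ as you yourself computed. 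Hence your example does not show that the bound $\pi/2$ is sharp for the hypothesis as it is used in the proof.

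The paper avoids this by choosing a \emph{finite} group $H$ (signed coordinate permutations on $\C^2$) whose orbit is the two-point set $\{e_P,e_Q\}$ with $P,Q$ the coordinate projections. The midpoint of the geodesic $\gamma_{e_P,e_Q}$ lies in $\Gr_{1,2}$ (it is $e_P$ conjugated by a rotation, hence again a symmetry of trace $0$), so it serves as a center in $\Gr_{1,2}$ at distance $\pi/2$ from both points. Your argument is easily repaired along the same lines: replace $SU_2$ by any subgroup whose orbit on $\Gr_{1,2}$ is exactly two antipodal points, and use the geodesic midpoint (which stays in $\Gr_{1,2}$) rather than $i\id$ as the witnessing center.
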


\begin{proof}
Since the action is isometric for the $d_{\infty,\epsilon}$ metrics and the radius in the $d_\infty$ metric of $\oo(e_P)=\{he_Ph^{-1}:h\in H\}$ is less than $\pi/2$ the action has fixed point $e_Q$ in $\Gr_{m,n}$. Therefore the projection $Q$ satisfies 
$$he_Qh^{-1}=h(2Q-\id)h^{-1}=2Q-\id,$$
for all $h\in H$, and the conclusion of the theorem follows.

To prove the optimality of the bound consider $\Gr_{1,2}$ and take the projections $P$ and $Q$ onto the first and second coordinates of $\C^2$. Then

$e_P=
 \left(\begin{array}{cc}
1 & 0 \\
\\
0 & -1
\end{array}\right)
$
,\quad
$
e_Q=
 \left(\begin{array}{cc}
-1 & 0 \\
\\
0 & 1
\end{array}\right)
$
\quad and define \quad
$
x=
 \left(\begin{array}{cc}
0 & -1 \\
\\
1 & 0
\end{array}\right).
$

Then $e^{tx}$ is a group of rotation matrices and 
$$\gamma(t)=e_Pe^{tx}=e^{-\frac{1}{2} tx}e_Pe^{\frac{1}{2}tx}$$ 
for $t\in [0,\pi]$ is a curve from $e_P$ to $e_Q$ of speed $\|x\|_\infty =1$ in the $d_\infty$ metric and total length $\pi$. Hence $d_\infty(e_P,e_Q)\leq \pi$. Take as $H$ the group consisting of maps $(x_1,x_2)\mapsto (s_1x_{d(1)},s_2x_{d(2)})$ for signs $s_1,s_2$ and permutation $d$ of $\{1,2\}$. This group has no fixed points on $\Gr_{1,2}$ and the orbit of $e_P$ is $\{e_P,e_Q\}$, hence $\radius_\infty(\{e_P,e_Q\})\geq \pi/2$. Since $d_\infty(e_P,e_Q)\leq \pi$ we conclude that $\radius_\infty(\{e_P,e_Q\})\leq \pi/2$, hence $\radius_\infty(\{e_P,e_Q\})= \pi/2$.
\end{proof}






\noindent
\end{document}